\documentclass[11pt,a4paper]{article}

\usepackage{amsmath}
\usepackage{amssymb}
\usepackage{amsthm}
\usepackage{latexsym}
\usepackage{pstricks,hyperref}
\usepackage{amsbsy}
\usepackage{amscd}
\usepackage{mathrsfs}
\usepackage{txfonts}
\usepackage{amsfonts}
\usepackage{graphicx}
\addtolength{\textheight}{1cm}
\addtolength{\textwidth}{2.2cm}
\addtolength{\oddsidemargin}{-1,8cm}
\addtolength{\evensidemargin}{-1,8cm}
\newtheorem{thm}{Theorem}[section]
\newtheorem{cor}[thm]{Corollary}
\newtheorem{lem}[thm]{Lemma}
\newtheorem{exm}[thm]{Example}
\newtheorem{prop}[thm]{Proposition}
\newtheorem{defn}[thm]{Definition}
\newtheorem{rem}[thm]{Remark}
%%%%%%%%%%%%%%%%%%%%%%%%%%%%%%%%%%%%%%%%%%%Jie Zhang
\usepackage{pstricks,color}
%%%%%%%%%%%%%%%%%%%%%%%%%%%%%%%%%%%%%%%%%Jir Zhang

%\textwidth=15.3cm

%\textheight=9in

\parindent0pt

\begin{document}

\begin{center}
{\Large \bf Cotorsion pairs in the cluster category of a marked surface\ \footnote{Supported by the NSF of China
(Grants 11131101) }}

\bigskip

{\large Jie Zhang, Yu Zhou and
 Bin Zhu}
\bigskip

{\small
\begin{tabular}{cc}
Department of Mathematical Sciences & Department of Mathematical
Sciences
\\
Tsinghua University & Tsinghua University
\\
  100084 Beijing, P. R. China &   100084 Beijing, P. R. China
\\
{\footnotesize E-mail: jie.zhang@math.tsinghua.edu.cn} &
{\footnotesize E-mail: yu-zhou06@mails.tsinghua.edu.cn}
\end{tabular}
\begin{tabular}{c}
Department of Mathematical Sciences
\\
 Tsinghua University
\\
  100084 Beijing, P. R. China
\\
{\footnotesize E-mail: bzhu@math.tsinghua.edu.cn}
\end{tabular}

 \vspace{0.4cm} {\it Dedicated to Professor Idun Reiten on the occasion of her seventieth birthday.}}
\bigskip

%\today

\end{center}

\def\s{\stackrel}
\def\Longrightarrow{{\longrightarrow}}
\def\A{\mathcal{A}}
\def\B{\mathcal{B}}
\def\C{\mathcal{C}}
\def\D{\mathcal{D}}
\def\T{\mathcal{T}}
\def\R{\mathcal{R}}
\def\S{\mathcal{S}}
\def\H{\mathcal{H}}
\def\U{\mathscr{U}}
\def\V{\mathscr{V}}
\def\W{\mathscr{W}}
\def\X{\mathscr{X}}
\def\Y{\mathscr{Y}}
\def\Z{\mathcal {Z}}
\def\I{\mathcal {I}}
\def\add{\mbox{add}}
\def\Aut{\mbox{Aut}}
\def\coker{\mbox{coker}}
\def\deg{\mbox{deg}}
\def\dim{\mbox{dim}}
\def\End{\mbox{End}}
\def\Ext{\mbox{Ext}}
\def\Hom{\mbox{Hom}}
\def\Gr{\mbox{Gr}}
\def\id{\mbox{id}}
\def\Im{\mbox{Im}}
\def\ind{\mbox{ind}}
\def\Int{\mbox{Int}}
\def\mod{\mbox{mod}}
%%%%%%%%%%%%%%%%%%%%%%%%%%%%%%%Jie Zhang
\def\ggz{\Gamma}
\def\bz{\beta}
\def\az{\alpha}
\def\gz{\gamma}
\def\da{\delta}
\def\fs{{\mathfrak{S}}}
\def\ff{{\mathfrak{F}}}
\def\zz{\zeta}
\def\thz{\theta}
\def\ra{\rightarrow}
%%%%%%%%%%%%%%%%%%%%%%%%%%%%%%%%%%Jie Zhang

\def \text{\mbox}

\hyphenation{ap-pro-xi-ma-tion}

\begin{abstract}

We study extension spaces, cotorsion pairs and their mutations in
the cluster category of a marked surface without punctures. Under
the one-to-one correspondence between the curves, valued closed
curves in the marked surface and the indecomposable objects in the
associated cluster category, we prove that the dimension of
extension
 space of two indecomposable objects in the cluster categories equals to the intersection number of the corresponding curves.
 By using this result, we prove that there are no non-trivial $t-$structures in the cluster categories when the surface is
connected. Based on this result,
 we give a classification of cotorsion pairs in these categories. Moreover we define the notion of
 paintings of a marked surface without punctures
 and their rotations. They are a geometric model of cotorsion pairs and of their mutations respectively.

\end{abstract}

\textbf{Key words.} Marked surface; Cluster category; Cotorsion pair; $t-$structure; Painting; Mutation; Rotation.
\medskip

\textbf{Mathematics Subject Classification.} 16G20, 16G70, 05A15,
13F60, 18E30.

\section{Introduction}

The notion of torsion pairs (or torsion theory) in abelian
categories was introduced by Dickson \cite{D66} (see \cite{ASS06} for
further details). It is important in algebra and geometry \cite{BRe07}
and plays an important role in representation theory of algebras, in
particular in tilting theory \cite{ASS06}. The triangulated version of
torsion pairs was introduced by Iyama and Yoshino \cite{IY08} in their
study of mutation of cluster tilting subcategories, see also
\cite{KR07,BRe07}. Cluster tilting objects (or subcategories) appear
naturally in the study on the categorification of cluster algebras
\cite{BMRRT06}. They have many nice algebraic
 and combinatorial properties which have been used in the
categorification of cluster algebras \cite{K12,K08}. In particular,
there is bijection between cluster tilting objects in a cluster
category of an acyclic quiver and clusters of the corresponding
cluster algebra \cite{Re10, K12}.

Cluster tilting subcategories in triangulated categories are the
torsion classes of certain torsion pairs. In general, a triangulated
category (even a $2-$Calabi-Yau triangulated category) may not admit
any cluster tilting subcategories \cite{KZ08,BIRS09}. However it always
admits torsion pairs, for example, the trivial torsion pair: (the
whole category, the zero category). Consequently triangulated
categories admit cotorsion pairs, since in a triangulated category
$\C$ with shift functor $[1]$, $(\X,\Y)$ is a cotorsion pair in $\C$
if and only if $(\X, \Y[1])$ is a torsion pair.

The geometric construction of cluster categories of type $A$ was
given by Caldero-Chapton-Schiffler in \cite{CCS06}, see also \cite{Sch08} for type
$D$, \cite{HJ12} for type $A_{\infty}$, and \cite{BM10} for (abelian) tube
categories. Many algebraic properties (e.g the extension dimensions,
Auslander-Reiten triangles) of these cluster categories were studies
in geometric terms.  Torsion pairs in the cluster categories of
$A_{\infty}$ were
 classified by Ng in terms of Ptolemy diagrams of a $\infty-$gon
\cite{Ng10}. By using the idea of Ng, Holm-J{\o}rgensen-Rubery
\cite{HJR11} gave a classification of torsion pairs in the cluster
categories of type $A_n$ by the Ptolemy diagrams of a regular
$(n+3)-$gon. We also note that Baur-Buan-Marsh \cite{BBM11} gave a
classification of torsion pairs in the (abelian) tube categories.
Recently Holm-J{\o}rgensen-Rubey announced a classification of torsion
pairs in cluster tubes.

 Let $(S,M)$ be a pair consisting of a compact oriented Riemann
surface $S$ with non-empty boundary and a finite set $M$ of marked
points on the boundary of $S$, with at least one marked point on
each component of the boundary.  We do not assume the surface to be connected,
but we assume that $S$ has
no component homeomorphic to a monogon, digon, or triangle. Let
$\C_{(S,M)}$ be the generalized cluster category in the sense of
Amiot \cite{Ami09} associated to $(S,M)$. It is a $2-$Calabi-Yau
triangulated category with cluster tilting objects. In \cite{BZ11} the
authors proved that there is a bijection between the indecomposable
objects in $\C_{(S,M)}$ and the curves, valued closed curves in
$(S,M)$, they also gave a geometric description of Auslander-Reiten
triangles. We note that the cluster algebras associated to a marked
surface (with or without punctures) has been studied by Fomin,
Shapiro and Thurston in \cite{FST08} and many others.

Let $\C$ be a $2-$CY triangulated category, $(\X,\Y)$ a cotorsion
pair in $\C$. Denote by $I$ the core of $(\X,\Y)$, i.e.
 the intersection of $\X$ and $\Y$. In \cite{ZZ11}, the authors
  defined the $\D-$mutation of $(\X,\Y)$ for $\D\subset I$,
   and proved that the $\D-$mutation of $(\X,\Y)$ is a cotorsion pair.
\\

    In this paper, we prove
that the dimension of extension spaces between two string objects in
$\C_{(S,M)}$ equals to the intersection number of corresponding
curves in $(S,M)$. By using this result, we show that there is no
non-trivial $t-$structures in the cluster category $\C_{(S,M)}$ when
$(S,M)$ is connected. We give a classification of cotorsion pairs in
the cluster category $\C_{(S,M)}$ by  using the terms of curves and
valued closed curves in $(S,M)$.  Furthermore, we define the notion
of paintings of $(S,M)$ which serve as a geometric model of
cotorsion pairs in $\C_{(S,M)}$. We also define the rotation of
paintings, which is proved to be compatible to the mutation of
cotorsion pairs.
\\

This paper is organized as follows: In Section 2, some basic
  definitions and results on cotorsion pairs are recalled in the first subsection. In the second subsection,
   we recall the
  Br\"ustle-Zhang's bijection $X_{?}^{(S,M)}$ from the curves and valued
  closed curves in a marked surface $(S,M)$ to the indecomposable objects in the associated cluster category
  $\C_{(S,M)}$, and the geometric construction of Auslander-Reiten triangles in \cite{BZ11}.
  In the final subsection, we recall the Iyama-Yoshino reduction of
  $2-$Calabi-Yau categories and its geometric interpretation given
  by Marsh-Palu for the cluster categories $\C_{(S,M)}$ in \cite{MP11}.
  We prove that the extension space of two objects in the
subfactor categories is isomorphic to the extension space of the
  corresponding objects in the original categories, which will be used latter.
 In Section 3, by using Crawley-Boevey's description of the basis of Hom-space of two string
 modules \cite{C-B89}, we prove the main result (Theorem 3.5) in this section:
 the dimension of extension space between two string objects in $\C_{(S,M)}$
 is the same with intersection number of the corresponding curves in
 $(S,M)$.
   In Section 4, the first main result is that when $S$ is connected, the cluster category $\C_{(S,M)}$ has no non-trivial
   $t-$structure (Theorem 4.3). This allows us to give a classification of cotorsion
   pairs in $\C_{(S,M)}$ (Theorem 4.5), which is the second main result in this section.
We define, in Section 5, the notion of painting of $(S,M)$ and its
 rotation. Under a correspondence between cotorsion pairs and paintings of
 $(S,M)$, we give a geometric description of the mutation of cotorsion
 pairs by rotation of paintings.

Throughout this paper, $k$ denotes an algebraic closed field.

\section{Preliminaries}

\subsection{Triangulated categories and the abelian quotients}

In this section we recall some basic notations and facts on cotorsion pairs in a
triangulated category $\C$ with shift functor $[1]$. By $X\in \C$, we mean that $X$ is an object
of $\C$. For a subcategory $\X$ of $\C$, denoted $\X\subset\C$, we always assume that $\X$
is a full subcategory and closed under taking isomorphisms, direct
summands and finite direct sums. Moreover, let
$$\X ^{\bot}=\{Y\in\C\mid\Hom_{\C}(X,Y)=0 \mbox{~for any~} X\in \X\}$$
and
$$^{\bot}\X=\{Y\in\C\mid\Hom_{\C}(Y,X)=0 \mbox{~for any~} X\in \X\}.$$

For two subcategories $\X,
\Y$, by $\Hom_{\C}(\X, \Y)=0$, we mean that $\Hom_{\C}(X,Y)=0$
for any $X\in \X$ and any $Y\in \Y$. A subcategory $\X$ of $\C$ is said to be a rigid subcategory
if $\Hom(\X,\X[1])=0$. We denote by $\Ext_{\C}^n(X,Y)$ the space
$\Hom_{\C}(X,Y[n])$. Let
$$\X*\Y=\{Z\in\C\mid ~~\text{there exists a triangle }  X\rightarrow Z\rightarrow Y\rightarrow X[1]
 \mbox{~in $\C$~with~} X\in \X, Y\in \Y\}.$$
It is  easy to see that $\X*\Y$ is
closed under taking isomorphisms and finite direct sums. A subcategory $\X$ is said to be closed
under extensions (or an extension-closed
 subcategory) if $\X*\X\subset \X$. Note that $\X*\Y$ is closed under taking direct
summands by Proposition 2.1(1) in \cite{IY08} if $\Hom(\X,\Y)=0$. Therefore, in this case $\X*\Y$ can
be understood as a subcategory of $\C$.
 We recall the definition of cotorsion pairs in a triangulated category $\C$ from \cite{Na11} \cite{IY08}.

\begin{defn}\label{def} Let $\X$ and $\Y$ be subcategories of a triangulated category $\C$.
\begin{itemize}
\item[$1.$] The pair $(\X,\Y)$ is a cotorsion pair if
$$\Ext_\C^1(\X,\Y)=0\text{ and }\C=\X*\Y[1]\text{.}$$

\item[$2.$] A $t-$structure $(\X, \Y )$ in $\C $ is a cotorsion pair such that $\X$ is closed under $[1]$ (equivalently
$\Y $ is closed under $[-1]$). In this case $\X\bigcap \Y[2]$ is an
abelian category, which is called the heart of $(\X,\Y)$
\cite{BBD81} \cite{BRe07}.

\item[$3.$] The subcategory  $\X$ is said to be a cluster tilting
 subcategory if $(\X, \X)$ is a cotorsion pair \cite{BMRRT06}. We say that a basic object $T$ is a cluster tilting object
 if its additive closure $\add T$ is a cluster tilting subcategory.
\end{itemize}
\end{defn}
Moreover, we call the subcategory $I=\X\bigcap \Y $ the core of the cotorsion pair $(\X,\Y)$.

 \begin{rem} A pair
$(\X,\Y)$ of subcategories of $\C$ is said to be
 a torsion pair if $\Hom(\X,\Y)=0$ and $\C=\X*\Y$. In this case,
 $I=\X \bigcap\Y[-1]$ is called the core of the torsion pair. Moreover,
 a pair $(\X,\Y)$ is a cotorsion pair if and only if $(\X, \Y [1])$
is a torsion pair in the sense of \cite{IY08}.
\end{rem}

Recall that a subcategory $\X$ is said to be contravariantly finite in $\C$,
 if any object $M\in \C$ admits a right
$\X-$approximation $f:X\rightarrow M$, which means that any map from
$X'\in \X$ to $M$ factors through $f$. The left $\X-$approximation
of $M$ and covariantly finiteness of $\X$ can be defined dually.
$\X$ is called functorially finite in $\C$ if $\X$ is both
covariantly finite and contravariantly finite in $\C$.
Note that if $(\X,\Y)$ is a torsion pair, then $\X={}^{\bot}\Y$, $\Y=\X^\bot$, and
it follows that $\X$ (or $\Y$) is a contravariantly (covariantly, respectively) finite and extension-closed subcategory of
$\C$.

\medskip

For subcategories
$\D\subset\X$ of $\C$, the quotient category $\X/\D$ of $\X$ by $\D$
has the same objects as $\X$, and its morphism spaces are defined by
$$\Hom_{\X/\D}(X,Y)=\Hom_\C(X,Y)/\D(X,Y),$$ where $\D(X,Y)$
 is the subset of $\Hom_\C(X,Y)$ consisting of morphisms which factor through some object in $\D$.
 In particular, we have the quotient category $\C/\D$ which is an
additive category.

\medskip

\subsection{The cluster category of a marked surface}\label{marked surface}

\medskip

Let $(S,M)$ be a marked surface without punctures, i.e. $S$ is a
compact oriented Riemann surface with $\partial S\neq\emptyset$ and
$M$ is a finite set of marked points on the boundary of $S$ such
that there is at least one marked point on each connected component
of boundary of $S$. Note that the cluster algebras associated to
$(S,M)$ has been studied by many papers, see for example, \cite{FST08}. We
also note that we do not assume the surface to be connected.

\medskip

By a curve $\gz$ in $(S,M)$, we mean the image of a continuous function $\gz:[0,1]\rightarrow S$ such that $\gz(0), \gz(1)\in M$ and $\gz(t)\notin M$ for $0<t<1$, which is not homotopic to a boundary segment. By a closed curve $b$ in $(S,M),$ we mean the image of a non-contractible continuous function $b: \mathbb{S}^1 \rightarrow S\setminus\partial S$ where $\mathbb{S}^1$ denotes the unit circle in the complex plane, a pair $(\lambda, b)$ with $\lambda\in k^*$ is said to be a valued closed curve.

We say two curves $\gz$ and $\da$ are homotopic if they are homotopic relative to $M$.
 Two valued closed curves $(b_0,\lambda_0)$ and $(b_1,\lambda_1)$ are called homotopic if $b_0$ and $b_1$ are
 homotopic and $\lambda_0=\lambda_1$. Each curve $\gamma$ or valued closed curve $(b,\lambda)$ is considered
 up to homotopy. For two curves $\gz$ and $\da$ in $(S,M)$, we denote
 by $\Int(\gz,\da)$ the minimal intersection number of two representatives of the homotopy
 classes of $\gz$ and $\da$ (the intersection at the endpoints do not count).
 An arc is a curve $\gz$ in $(S,M)$ such that $\Int(\gz,\gz)=0$. Two arcs $\gz$, $\da$ are called
 compatible if $\Int(\gz,\da)=0$. A triangulation $\ggz$ of $(S,M)$ is any maximal collection of compatible arcs.

\medskip

Recall that each triangulation
$\Gamma$ yields a quiver with potential $(Q_\ggz, W_{\Gamma})$:
\begin{itemize}
\item[(1)] $Q_{\ggz}=(Q_0,Q_1)$ where the set of vertices $Q_0$ are indexed
by the arcs of $\Gamma$. Whenever there is a triangle $\Delta$ having $i$ and $j$ in $\ggz$ as edges, with $j$ following
$i$ in the clockwise orientation (which is induced by the orientation of
$S$), then there is an arrow from $i$ to $j$.

\item[(2)]Each internal triangle $\Delta$ whose edges
are all in $\ggz$ yields a unique 3-cycle
$\alpha_{\Delta}\beta_{\Delta}\gamma_{\Delta}$ (up to cyclic permutation), the potential $W_{\Gamma}$ is then defined as
the sum of all 3-cycles arising from internal triangles:
$$W_\ggz =\displaystyle\sum_\vartriangle\alpha_\vartriangle \beta_\vartriangle\gz_\vartriangle.$$

\end{itemize}

It is proved in \cite{L-F08, ABCP10} that the Jacobian algebra
$J(Q_{\Gamma},W_{\Gamma})$ is a finite dimensional string algebra
(see more details in \cite{BRi87} or in section \ref{string-alg} for string algebras).

\medskip

The cluster category of a marked surface is defined by \cite{Ami09}. In fact,
since the Jacobian algebra $J(Q_{\Gamma},W_{\Gamma})$ is
finite-dimensional, then there is a generalized cluster
category $C_{(Q_\ggz,W_\Gamma)}$ associated to $(Q^{op}_\ggz,W^{op}_\Gamma)$ which is
2-CY, Hom-finite and admits a cluster-tilting object $T_{\Gamma}$
such that

$$C_{(Q_\ggz,W_\Gamma)}/T_\Gamma \overset{\sim}{\longrightarrow}\mod J(Q_\ggz,W_\ggz)$$

under the functor $\Hom_{\C_{(Q_\ggz,W_\ggz)}}(T_\Gamma[-1],-)$ (see
\cite{KR07}, \cite{KZ08}). This cluster category is in fact independent of the
choice of the triangulation $\Gamma$ \cite{KY11}, since any two
triangulations are related by flips which correspond to mutations of
the corresponding quivers with potential. We can denote it by
$\C_{(S,M)}$.

\medskip

Recall that by using the above equivalence of two categories, the indecomposable objects in $\C_{(S,M)}$ are
indexed by curves and valued closed curves in $(S,M)$ which are called string objects and band objects respectively \cite{BZ11}.

We denote the indecomposable object in $\C_{(S,M)}$ corresponding to
a curve $\gamma$ or a valued closed curve $(b,\lambda)$ by
$X_{\gamma}^{(S,M)}$ or $X_{(\lambda, b)}^{(S,M)}$, respectively.
When no confusion can arise, we omit the superscript $(S,M)$. Since
any subcategories which we consider are closed under taking
isomorphisms, finite direct sums and direct summands, so each of
them is determined by its indecomposable objects. Therefore there is
a bijection $V\mapsto \X_V$ from the collections $V$ of curves and
valued closed curves in $(S,M)$ to the subcategories $\X_V$ of
$\C_{(S,M)}.$ Under this bijection, the collections $I$ of compatible arcs correspond
to rigid subcategories $\X_I$.

\medskip

The irreducible morphisms in $\C_{(S,M)}$ are also described in \cite{BZ11} by elementary pivot moves:
For a curve $\gamma$ in $(S,M)$, we denote the endpoints of $\gamma$ by $s(\gamma)$ and $e(\gamma)$ respectively.
Note that the orientation of $S$ induces an orientation on each boundary component of $S$.
The curve which is obtained from $\gamma$ by moving $s(\gamma)$ anticlockwise (resp. $e(\gamma)$) to the next $k-$th point is denoted by
by ${}_{s^k}\gamma$ (resp. $\gamma_{e^k}$) on the same boundary (see the following picture).
\begin{center}
\includegraphics[height=1.4in]{2-0.jpg}
\end{center}
We say $\gamma_{e}$ or ${}_{s}\gamma$ is obtained from $\gz$ by elementary pivot moves.
We summarize some more results in \cite{BZ11} which will be mentioned in following sections:
\begin{itemize}
\item The shift functor in $\C$ can be described as $X_{\gamma}[1]=X_{{}_{s}\gamma_{e}},$ then it makes sense that we denote
${}_{s^k}\gamma_{e^k}$ by $\gamma[k].$ Moreover,
the AR-triangles
between string objects in $\C_{(S,M)}$ can be described as
$$X_{{}_{s}\gamma_{e}}\rightarrow X_{{}_{s}\gamma}\oplus X_{\gamma_{e}}\rightarrow X_{\gamma}\rightarrow X_{{}_{s}\gamma_{e}}[1]$$
and band objects are stable under shift functor, that is $X_{(\lambda,b)}=X_{(\lambda,b)}[1].$

\item For a curve $\gamma$ in $(S,M)$, $\gamma$ has no self-intersections if and only
if $\Ext^1_{\C_{(S,M)}}(X_\gamma,X_\gamma)=0$ [Theorem 5.1, \cite{BZ11}. If we take two different curves
 $\gamma$ and $\delta$, then $\Int(\gamma,\delta)\neq0$ implies $\Ext^1_{\C_{(S,M)}}(X_\gamma,X_\delta)\neq 0\neq \Ext_{\C_{(S,M)}}^1(X_\delta,X_\gamma)$.

\end{itemize}

\subsection{Compatibility between IY subfactor triangulated categories and Cutting along arcs}

Let $\C$ be a $2-$CY triangulated $k-$category and $\D$ a
functorially finite rigid subcategory of $\C$. Denote
 $\Z={}^\bot\D[1]=\D[-1]^\bot$. Iyama and Yoshino \cite{IY08} proved
that the quotient category $\Z/\D$ is a triangulated category. The
shift functor $\langle1\rangle$ is induced by the triangle
$X\s{f}\rightarrow D_X\s{g}\rightarrow X\langle1\rangle\rightarrow
X[1]$ in $\C$ for $X\in\C$, where the morphism $f$ is a left
$\D-$approximation of $X$ and $g$ is a right $\D-$approximation of
$X\langle1\rangle$. This triangulated category, denoted by
$\C_{\D}$, is called subfactor triangulated category of $\C$.

The following lemma used in the next section follows straightly from the structure of subfactor category.
We give a proof here for the convenience of the reader.

\begin{lem}\label{Ext=}
Keep notations above. For any two objects $X,Y\in\Z$,
$$\Ext_\C^1(Y,X)\cong\Ext_{\C_\D}^1(Y,X)$$
as $k-$vector spaces.
\end{lem}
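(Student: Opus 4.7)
The plan is to compute both sides directly from the definitions and match them via the defining triangle of the subfactor shift. First, I would unpack: by construction of $\C_\D=\Z/\D$ with shift $\langle 1\rangle$, one has
\[
\Ext^1_{\C_\D}(Y,X)=\Hom_{\C_\D}(Y,X\langle1\rangle)=\Hom_{\C}(Y,X\langle1\rangle)/\D(Y,X\langle1\rangle),
\]
while $\Ext^1_\C(Y,X)=\Hom_\C(Y,X[1])$. So the statement becomes: the connecting map from the Iyama--Yoshino triangle descends to a bijection between the quotient on the left and $\Hom_\C(Y,X[1])$.

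Next, I would apply the functor $\Hom_\C(Y,-)$ to the defining triangle
\[
X\xrightarrow{f} D_X\xrightarrow{g} X\langle 1\rangle\xrightarrow{h} X[1],
\]
where $f$ is a left $\D$-approximation of $X$. This yields an exact sequence
\[
\Hom_\C(Y,D_X)\to \Hom_\C(Y,X\langle 1\rangle)\xrightarrow{h_*}\Hom_\C(Y,X[1])\to \Hom_\C(Y,D_X[1]).
\]
Since $Y\in \Z={}^{\bot}\D[1]$ and $D_X\in\D$, the last term vanishes, so $h_*$ is surjective. Hence it suffices to identify $\ker h_*$ with $\D(Y,X\langle 1\rangle)$.

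By exactness, $\ker h_*$ consists precisely of morphisms $Y\to X\langle 1\rangle$ factoring through $g$, and since $D_X\in\D$, this is contained in $\D(Y,X\langle1\rangle)$. The reverse inclusion is the crux and requires the second condition $X\in \D[-1]^{\bot}$. Given any factorization $Y\xrightarrow{a} D\xrightarrow{b} X\langle 1\rangle$ with $D\in\D$, I apply $\Hom_\C(D,-)$ to the same triangle to obtain
\[
\Hom_\C(D,D_X)\xrightarrow{g_*}\Hom_\C(D,X\langle 1\rangle)\to\Hom_\C(D,X[1]).
\]
The last group equals $\Hom_\C(D[-1],X[2])$; using $X\in\D[-1]^{\bot}$ together with $2$-Calabi--Yau duality (or directly, writing it as $\Hom_\C(D,X[1])$ and using $\Hom_\C(D[-1],X)=0$ with the Serre duality / translation) it vanishes, so $g_*$ is surjective. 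Hence $b=g\circ c$ for some $c:D\to D_X$, and $b\circ a=g\circ(c\circ a)$ factors through $g$, placing it in $\ker h_*$. This yields the claimed isomorphism.

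The step I expect to need the most care is the reverse inclusion in the kernel identification, because it must exploit both defining conditions of $\Z$ simultaneously ($Y\in{}^{\bot}\D[1]$ to get surjectivity of $h_*$, and $X\in\D[-1]^{\bot}$ to get surjectivity of $g_*$), and one must be careful that the vanishing $\Hom_\C(D,X[1])=0$ for $D\in\D$ really follows from $X\in\D[-1]^{\bot}$ (via $\Hom_\C(D,X[1])\cong\Hom_\C(D[-1],X)=0$, using that $\D$ is closed under shifts in the relevant sense, or simply the identity $\Hom_\C(D,X[1])=\Hom_\C(D[-1],X)$ after re-indexing). Once this vanishing is in hand the rest is diagram chasing.
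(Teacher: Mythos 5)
Your proof is correct and takes essentially the same route as the paper: apply $\Hom_\C(Y,-)$ to the Iyama--Yoshino triangle $X\to D_X\xrightarrow{g} X\langle1\rangle\to X[1]$, use $Y\in{}^\bot\D[1]$ to kill $\Hom_\C(Y,D_X[1])$, and identify the kernel of the connecting map with $\D(Y,X\langle1\rangle)$. The only difference is cosmetic: the paper simply invokes the known fact (stated in its setup of the construction) that $g$ is a right $\D$-approximation of $X\langle1\rangle$ to get $\Im(\Hom_\C(Y,g))=\D(Y,X\langle1\rangle)$ in one line, whereas you re-derive that property of $g$ from scratch by applying $\Hom_\C(D,-)$ to the triangle and using $X\in\D[-1]^\bot$ to get $\Hom_\C(D,X[1])\cong\Hom_\C(D[-1],X)=0$. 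Your derivation is valid (and you are right that no Calabi--Yau or Serre duality is needed for that vanishing, just re-indexing by the shift), so this is a slightly more self-contained version of the same argument.
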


\begin{proof}
Applying $\Hom_{\C}(Y,-)$ to the triangle $X\s{f}\rightarrow D_X\s{g}\rightarrow X\langle1\rangle\rightarrow X[1]$,
we get an exact sequence
$$\Hom_{\C}(Y,X)\s{\Hom_{\C}(Y,f)}\rightarrow\Hom_{\C}(Y,D_X)
\s{\Hom_{\C}(Y,g)}\rightarrow\Hom_{\C}(Y,X\langle1\rangle)\rightarrow\Hom_{\C}(Y,X[1])\rightarrow
0$$
where $0=\Hom_\C(Y,D_X[1])$ by $D_X\in\D$ and $Y\in\Z$.
Therefore $$\Hom_{\C}(Y,X[1])\cong\Hom_{\C}(Y,X\langle1\rangle)/\Im(\Hom_{\C}(Y,g))$$ as $k-$vector spaces. Since $g$ is a right $\D-$approximation of $X\langle1\rangle$, every morphism from $Y$ to $X\langle1\rangle$ factoring through $\D$ factors through $g$. Then $\Im(\Hom_{\C}(Y,g))=\D(Y,X\langle1\rangle)$. So
$$\Hom_{\C}(Y,X[1])\cong\Hom_{\C_\D}(Y,X\langle1\rangle)$$
which means that $\Ext_\C^1(Y,X)\cong\Ext_{\C_\D}^1(Y,X)$.
\end{proof}

\medskip

Let $(S,M)$ be a marked surface without punctures and $\C=\C_{(S,M)}$ be
the associated cluster category. Given a rigid subcategory $\X_I$ of
$\C_{(S,M)}$ where $I$ is a
collection of some compatible arcs in $(S,M).$ We denote by $(S,M)/I$ the new marked surface obtained from $(S,M)$
by cutting successively along each arc in $I$ and then removing
components which are homeomorphic to a triangle. We denote by $\V(S,M)$ the collection of all curves and valued closed curves in
$(S,M)$. By $\V_{I}(S,M)$, we mean the collection of all curves and closed curves in $(S,M)$ which do not belong to $I$ such that
they do not cross any arcs in $I$.
\begin{prop}[\cite{MP11}]\label{propMP} There is an equivalence of categories
$$\pi_{I}: \C_{\X_I}\overset{~~~\thicksim~~~}{\longrightarrow} \C_{(S,M)/{I}}$$
such that
$$\pi_I(X^{(S,M)}_\gamma)= X^{(S,M)/I}_{\gamma} \mbox{~~and~~} \pi_I(X^{(S,M)}_{(b,\lambda)})= X^{(S,M)/I}_{(b,\lambda)}$$
for any $\gz, (b,\lambda)\in\V_{I}(S,M)$.
\end{prop}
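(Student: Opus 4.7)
The plan is to reduce the statement to a comparison of Jacobian algebras and then invoke Keller--Yang's recognition theorem for generalized cluster categories. The key geometric input is that cutting $(S,M)$ along the arcs of $I$ corresponds, at the level of triangulations, to deleting those arcs; at the algebraic level, this should match the Iyama--Yoshino reduction of $\C_{(S,M)}$ by $\X_I$. To set things up, I would first extend $I$ to a triangulation $\Gamma = I \cup \Gamma_0$ of $(S,M)$, so that $T_\Gamma = X_I \oplus T_{\Gamma_0}$ is a cluster-tilting object of $\C_{(S,M)}$, and cutting $(S,M)$ along the arcs of $I$ and discarding any triangle components produces $(S,M)/I$ together with a triangulation $\Gamma_0'$ which is the image of $\Gamma_0$.

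Next, I would compare the endomorphism algebras on both sides. By Iyama--Yoshino, the image of $T_{\Gamma_0}$ in $\C_{\X_I}$ is a cluster-tilting object whose endomorphism algebra arises from $J(Q_\Gamma, W_\Gamma)$ by a Calabi--Yau reduction that deletes the vertices of $I$ together with the arrows and cyclic terms attached to them. On the other side, the endomorphism algebra of $T_{\Gamma_0'}$ in $\C_{(S,M)/I}$ is $J(Q_{\Gamma_0'}, W_{\Gamma_0'})$. Inspecting the effect of cutting along a single arc $\alpha \in I$ on the one or two triangles of $\Gamma$ adjacent to $\alpha$ should show that these two quivers with potential are right-equivalent, and the general case follows by iterating on the arcs of $I$.

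With this in hand, both $\C_{\X_I}$ and $\C_{(S,M)/I}$ are Hom-finite, 2-Calabi--Yau triangulated categories containing a cluster-tilting object whose endomorphism algebra is this common Jacobian algebra, so Keller--Yang's recognition theorem produces an equivalence $\pi_I : \C_{\X_I} \to \C_{(S,M)/I}$ sending the image of $T_{\Gamma_0}$ to $T_{\Gamma_0'}$. To verify that $\pi_I$ acts on the indexing as claimed, I would use Lemma \ref{Ext=} and the Ext-equals-intersection theorem of Section 3 to conclude that an indecomposable $X_\gamma^{(S,M)}$ (or $X_{(b,\lambda)}^{(S,M)}$) lies in $\Z = {}^{\bot}\X_I[1]$ exactly when the underlying curve (resp.\ closed curve) belongs to $\V_I(S,M)$. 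The description of indecomposables over the two matching Jacobian algebras via string and band modules, combined with the geometric bijection between $\V_I(S,M)$ and the curves and valued closed curves in $(S,M)/I$, then identifies $\pi_I(X_\gamma^{(S,M)}) = X_\gamma^{(S,M)/I}$ and analogously on band objects.

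The main obstacle is the second step: verifying that the Calabi--Yau reduction of $(Q_\Gamma, W_\Gamma)$ at the vertices of $I$ is right-equivalent to $(Q_{\Gamma_0'}, W_{\Gamma_0'})$, with careful bookkeeping when a triangle of $\Gamma$ has more than one edge in $I$ and thus becomes a removed component of $(S,M)/I$, or when $I$ disconnects the surface. A secondary subtlety is to confirm that the equivalence produced by Keller--Yang sends string objects to string objects (rather than, say, twisting by some auto-equivalence of $\C_{(S,M)/I}$); this should follow from the uniqueness of the indecomposable over a string algebra attached to a prescribed dimension vector, but has to be checked against the explicit labelling by curves.
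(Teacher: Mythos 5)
This proposition is quoted by the authors directly from Marsh--Palu (\cite{MP11}); the paper contains no proof of its own, so there is nothing internal to compare your argument against. Your sketch is, at the level of strategy, the expected one: extend $I$ to a triangulation, match the Jacobian algebra governing the Iyama--Yoshino reduction $\C_{\X_I}$ with the Jacobian algebra of a triangulation of the cut surface $(S,M)/I$, and deduce the equivalence from the fact that generalized cluster categories are determined by their quiver with potential. This is broadly how Marsh--Palu proceed, and you have correctly flagged the genuine technical core, namely that the restriction/reduction of $(Q_\Gamma, W_\Gamma)$ at the vertices indexed by $I$ agrees with $(Q_{\Gamma_0'}, W_{\Gamma_0'})$, including the bookkeeping around removed triangle components and possible disconnection.

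Two points deserve correction or sharpening. First, ``Keller--Yang's recognition theorem'' is not quite the right citation: Keller--Yang \cite{KY11} is about derived equivalences induced by mutation of quivers with potential, not about recognizing a given $2$-Calabi--Yau category as a generalized cluster category. The recognition result you need (an algebraic $2$-CY category with a cluster-tilting object whose endomorphism algebra is a given Jacobi-finite Jacobian algebra is equivalent to the Amiot cluster category of that quiver with potential) goes back to Keller--Reiten in the acyclic case, with the generalization in the work of Amiot and collaborators; you should also verify that $\C_{\X_I}$ is algebraic before invoking it. Second, the final step is on shakier ground than you suggest: string modules over a string algebra are emphatically \emph{not} determined by their dimension vectors (two different strings visiting the same vertices with the same multiplicities give non-isomorphic modules of equal dimension vector), so ``uniqueness of the indecomposable attached to a prescribed dimension vector'' cannot be the mechanism that pins down $\pi_I$. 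A correct route is to observe that the equivalence is constructed so as to identify the two cluster-tilting objects and induce a specified isomorphism of their endomorphism algebras; one then tracks string and band modules over that algebra isomorphism, together with the bijection of \cite{ABCP10} between curves and strings, to conclude that $\pi_I(X^{(S,M)}_\gamma)=X^{(S,M)/I}_\gamma$ and likewise for band objects. Your remark that an indecomposable of $\C_{(S,M)}$ lies in ${}^{\perp}\X_I[1]$ exactly when its curve belongs to $\V_I(S,M)$ does follow from Theorem~\ref{thm1}, and this is a useful sanity check, but it only tells you \emph{which} objects survive the reduction, not \emph{where} $\pi_I$ sends them.
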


\section{Intersection number and Extension dimension}
We consider in this section curves in $(S,M)$ or string objects in $\C_{(S,M)}$, and reveal a link
between the intersection number of two curves and extension dimension of two corresponding string objects in $\C_{(S,M)}$.
We first recall a construction of a basis of the Hom-space between string modules \cite{S99,C-B89}.
\subsection{Maps between string modules}\label{string-alg}
Recall from \cite{BRi87} that a finite-dimensional algebra $A$ is a
string algebra if there is a quiver $Q$ and an admissible ideal $I$
such that  $A=kQ/I$ and the following conditions hold:
\begin{itemize}
\item[(S1)]At each vertex of $Q$ start at most two arrows and stop at most two arrows.
\item[(S2)]For each arrow $\alpha$ there is at most one arrow $\beta$ and at most one arrow
$\da$ such that $\alpha\beta \not\in I $ and $\da\alpha\not\in I.$
\end{itemize}

\medskip

For each arrow $\bz$, $s(\bz)$ (resp. $e(\bz)$) denotes its starting point (resp. its ending
point). We denote by $\bz^{-1}$ the formal inverse of $\bz$ with $s(\bz^{-1})=e(\bz)$ and $e(\bz^{-1})=s(\bz)$.
A word $w=\az_n\az_{n-1}\cdots\az_1$ of arrows and their formal inverses is called a string if $\az_{i+1}\neq \az_i^{-1}, e(\az_i)=s(\az_{i+1})$ for all $1\leq i\leq n-1$, and no subword nor its inverse is in $I$.
 Hence, a string can be viewed as a walk in $Q:$
$$w: x_1 \frac{~~~\az_1~}{}x_2 \frac{~~~\az_2~}{}\cdots x_{n-1} \frac{\az_{n-1}}{}x_{n}\frac{~~~\az_n~}{}x_{n+1}$$
where $x_i$ are vertices of $Q$ and $\alpha_i$ are arrows in either direction. We denote its length by $l(w)=n$.

A band $b=\az_n\az_{n-1}\cdots\az_2\az_1$ is defined to be a
string $b$ with $e(\az_1)=s(\az_n)$ such that each power $b^m$ is a string, but $b$ itself is not a proper
power of any string.

Recall in \cite{BRi87} that each string $w$ defines a unique string
module $M(w)$, each band $b$ yields a family of band modules
$M(b,n,\phi)$ with $n\geq 1$ and $\phi\in \mbox{Aut}(k^n).$  We
refer \cite{BRi87} for more definitions on sting modules and band
modules.

\medskip

A string $w=\az_1\az_2\cdots\az_n$ with all $\az_i\in Q_1$ is called direct string, and
a string of the form $w^{-1}$ where $w$ is a direct string is called inverse string. We denote
by $\mathcal{S}$ the set of all strings. For each arrow $\az \in Q_1$, let $U_\az$ and $V_\az$
 be inverse strings (as long as possible) such that $N_\az= U_\az \az V_\az$ is a string, see the
 following figure:
\begin{center}
\includegraphics[height=.3in]{3-1.jpg}
\end{center}

For a string $w\in\mathcal{S},$ define
${\mathcal{S}}_w=\{(E,w',F)\mid E,w',F\in{\mathcal{S}}, w=Ew'F\},$
we call $(E,w',F)$ a factor string of $w$ if
 \begin{itemize}
  \item $l(E)=0$ or $E=\az_1\cdots\az_n$ with $\az_n\in Q_1,$
  \item $l(F)=0$ or $F=\bz_1\cdots\bz_m$ with $\bz_1^{-1}\in Q_1.$
\end{itemize}
Dually, $(E,w',F)$ is said to be a substring of $w$ if the following hold:
 \begin{itemize}
 \item $l(E)=0$ or $E=\az_1\cdots\az_n$ with $\az^{-1}_n\in Q_1,$
  \item $l(F)=0$ or $F=\bz_1\cdots\bz_m$ with $\bz_1\in Q_1.$
\end{itemize}
Denote by $\ff(w)$ and $\fs(w)$ the set of all factor strings and substrings of $w$ respectively, and we define $\ff(0)=\fs(0)=\emptyset$ for a zero module.
Let $w$ and $v$ be two strings, a pair $(E_1,w_0,F_1)\times (E_2, v_0, F_2)\in \ff(w)\times\fs(v)$ is said to be an admissible pair
if $w_0=v_0$ or $w_0^{-1}=v_0.$ It is easy to understand the admissible pair if one has the following picture in mind.
\begin{center}
\includegraphics[height=1.6in]{3-2.jpg}
\end{center}
Recall that each admissible pair $a\in \ff(w)\times\fs(v)$ as above yields a canonical module homomorphism
$f_a: M(w)\longrightarrow M(v)$ by identifying factor module $M(w_0)$ of $M(w)$ given by factor string $w_0$ to
submodule $M(v_0)$ of $M(v)$ related to substring $v_0$ of $v$. A basis of Hom-space of two string modules can be described as follows:
\begin{thm}[\cite{C-B89}]\label{C-B} Consider two strings $w$ and $v$. Then $\{f_a\mid a\in \ff(w)\times\fs(v)\}$ is a basis of $\mbox{Hom}(M(w),M(v))$.
\end{thm}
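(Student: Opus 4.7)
The plan is to establish that $\{f_a \mid a \in \ff(w) \times \fs(v)\}$ is both linearly independent and spans $\Hom(M(w), M(v))$. First I would fix canonical bases of the two string modules: $M(w)$ has a basis $\{e_1, \ldots, e_{l(w)+1}\}$ indexed by the vertices $x_1, \ldots, x_{l(w)+1}$ visited by $w$, and similarly $\{e'_1,\ldots,e'_{l(v)+1}\}$ for $M(v)$, with the arrows of $Q$ acting as dictated by the strings. With these bases, the map $f_a$ attached to an admissible pair $(E_1, w_0, F_1) \times (E_2, v_0, F_2)$ is precisely the composition of the canonical surjection $M(w) \twoheadrightarrow M(w_0)$ with the canonical inclusion $M(v_0) \hookrightarrow M(v)$ (under the identification $w_0 = v_0$ or $w_0^{-1} = v_0$); it sends the window of basis vectors of $M(w)$ corresponding to $w_0$ bijectively onto the window of basis vectors of $M(v)$ corresponding to $v_0$, and annihilates all other basis vectors of $M(w)$.

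Linear independence is then immediate: distinct admissible pairs yield maps with distinct matrix supports in the chosen bases (the starting position in $M(w)$ and the orientation of the matching uniquely recover the admissible pair from the matrix), so no nontrivial linear combination of the $f_a$ can vanish.

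The harder direction is spanning. Given $f \in \Hom(M(w), M(v))$, write $f(e_i) = \sum_j c_{ij} e'_j$; clearly $c_{ij} = 0$ unless $x_i = y_j$ as vertices of $Q$. The plan is to show that the support $\{(i,j) : c_{ij} \neq 0\}$ decomposes as a disjoint union of matched windows, each corresponding to an admissible pair. Fix $(i,j)$ with $c_{ij} \neq 0$ and apply the compatibility relation $f(\alpha \cdot e_i) = \alpha \cdot f(e_i)$ for every arrow $\alpha$ incident to $x_i = y_j$: when an arrow appears compatibly in both $w$ at position $i$ and $v$ at position $j$, one obtains $c_{i \pm 1, j \pm 1} = c_{ij}$, propagating the relation to the neighboring positions; when the local configurations at $i$ and $j$ disagree, the admissible relations in $I$ together with the string algebra axioms (S1)--(S2) force certain $c_{i'j'} = 0$ and simultaneously impose precisely the boundary conditions on $E_1, F_1$ and on $E_2, F_2$ that define $\ff(w)$ and $\fs(v)$.

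The main technical obstacle will be the case-by-case bookkeeping at the endpoints of such matched windows, where one has to verify that the module-theoretic constraints coming from every arrow of $Q$ (including those not appearing in $w$ or $v$ at the given positions) translate exactly into the four combinatorial clauses defining factor strings of $w$ and substrings of $v$. Once this has been established, picking a maximal matched window in the support of $f$ and subtracting the corresponding scalar multiple of $f_a$ strictly decreases the number of nonzero entries $c_{ij}$, so induction on the size of the support expresses $f$ as a linear combination of the $f_a$. Together with the linear independence established above, this proves that $\{f_a\}$ is a basis of $\Hom(M(w),M(v))$.
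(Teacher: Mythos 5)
The paper does not prove Theorem~\ref{C-B}; it simply recalls it from Crawley-Boevey \cite{C-B89} (see also \cite{S99}) as background for the string-algebra computations in Section~3, so there is no internal proof of the paper's own to compare your proposal against. Evaluated on its own terms, your outline is the standard route to this result: describe the graph map $f_a$ in the canonical bases of $M(w)$ and $M(v)$ as a projection onto $M(w_0)$ followed by the inclusion of $M(v_0)$, prove linear independence from the matrices, and prove spanning by showing that the module relations force the nonzero entries of any homomorphism to organize into matched windows. You correctly identify the endpoint bookkeeping in the spanning step as the real work.

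The one place you are too quick is linear independence. Having pairwise distinct matrix supports is not by itself sufficient to conclude that a family of $0$--$1$ matrices is linearly independent, and in fact distinct admissible pairs can have \emph{overlapping} supports: a position $(i,j)$ with $x_i=y_j$ can in general lie in the support of both a same-direction and an opposite-direction admissible pair. What one actually needs (and what is true) is that for each matching direction, each entry $(i,j)$ lies in the support of at most one admissible pair --- this follows because a second same-direction pair through $(i,j)$ with a different window would force some letter of $w$ or of $v$ to be simultaneously direct and inverse, contradicting the factor/substring boundary conditions --- and then one peels off coefficients starting from an extremal position of an extremal window. That is exactly the same flavor of endpoint case analysis you rightly flag as the hard part of the spanning argument, so linear independence is not ``immediate.'' This does not invalidate your plan, but the claim should be downgraded from immediate to requiring the same care as the rest of the proof.
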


\subsection{Relation between intersection number and extension dimension}

Let $\ggz$ be a triangulation of $(S,M)$ and $J(Q_\ggz,W_\ggz)$ be the associated string algebra defined in section \ref{marked surface}. For each curve $\gamma$ with $d=\sum_{\delta\in\Gamma}\Int(\delta,\gamma),$ let
$\delta_1,\ldots,\delta_d$ be the arcs of $\Gamma$ that intersect
$\gamma$ in a fixed orientation of $\gamma$. See the following picture

\begin{center}
\includegraphics[height=1.6in]{3-3.jpg}
\end{center}

Then we obtain a
string $w(\Gamma,\gamma)$ as follows
$$\da_1\frac{~~~~~~}{}\da_2 \frac{~~~~~~}{}\cdots \frac{~~~~~~}{} \da_d$$
in $J(Q_\Gamma,W_\Gamma)$.
We denote by $M(\Gamma,\gamma)$ the corresponding string module. Recall that the map $\gamma\rightarrow M(\Gamma,\gamma)$ gives a bijection between the homotopy classes of curves in $(S,M)$ which are not in $\Gamma$ and the isoclasses
of string modules of $J(Q_{\Gamma},W_{\Gamma})$ \cite{ABCP10}. Analogously, there is a bijection between
powers $b^n$ of bands $b$ of $J(Q_{\Gamma},W_{\Gamma})$ and the homotopy classes of closed curves in $(S,M)$.

Let $M(\ggz,\gz)$, $M(\ggz,\da)$ be the corresponding string modules in $\mbox{mod}J(Q_\ggz,W_\ggz).$ We assume that $\gz$ intersects $\da$ at $A_1,\ldots, A_d$ with $d=\Int(\gz,\da).$
It is easy to imagine that most intersections yield a common subword $w$ for $w(\ggz,\gz)$ and $w(\ggz, \da)$ as follows:
\begin{center}
\includegraphics[height=1.5in]{3-4.jpg}
\end{center}
However, by definition of triangulation, it is not true when the
segment between $A_i$ to the endpoints of $\gz$ or $\da$ does not
cross any  arcs of $\ggz$. See for example,
\begin{center}
\includegraphics[height=2in]{3-5.jpg}
\end{center}
To avoid this, we add two more marked points $p^1$, $p^2$ lying between $a$ on the same boundary
for each endpoint $p\in\{e(\gz),s(\gz),e(\da),s(\da)\}$.
 Therefore, we get a new set of marked points $M'$, then we form a new triangulation $\ggz_1=\ggz\cup\ggz_0$ where $\ggz_0$
 contains arcs in $(S, M')$ which are homotopic to boundary segments in $(S,M)$. See the following picture for example:
\begin{center}
\includegraphics[height=2.5in]{3-6.jpg}
\end{center}

Let $\C_{(S,M')}$ be the cluster category corresponding to $(S,M')$,
$X'_\gz$ and $X'_\da$ be the string objects in $\C_{(S,M')}$ corresponding to
$\gz$ and $\da$. Then by Proposition \ref{propMP}, we have
$$\C_{(S,M)}\cong\C_{(S,M')/\ggz_0}.$$
Then Lemma \ref{Ext=} implies
$$\mbox{Ext}^1_{\C_{(S,M)}}(X_\gz,X_\da)\backsimeq\mbox{Ext}^1_{\C_{(S,M')}}(X'_\gz,X'_\da).$$
Therefore, we can study $\mbox{Ext}^1_{\C_{(S,M')}}(X'_\gz,X'_\da)$ instead of $\mbox{Ext}^1_{\C_{(S,M)}}(X_\gz,X_\da)$.
For each endpoint $p\in\{e(\gz),s(\gz),e(\da),s(\da)\}$, we take an arc $p^1p^2$ which forms a triangle with boundary arc $pp^1$ and $pp^2$ (see the above picture). By adding more arcs, we get a new triangulation $\ggz'$ of $(S,M')$ where each intersection of $\gz$ and $\da$ induces a common subword for $w(\ggz', \gz)$ and $w(\ggz',\da)$ in $\mbox{mod}J(Q_{\ggz'},W_{\ggz'})$. Moreover,
$$\mbox{Ext}^1_{\C_{(S,M)}}(X_\gz,X_\da)\backsimeq\mbox{Ext}^1_{\C_{(S,M')}}(X'_\gz,X'_\da)\backsimeq\mbox{Ext}^1_{\C_{(Q_{\ggz'},W_{\ggz'})}}(X'_\gz,X'_\da).$$

To compare the intersection number of $\gz$ and $\da$ with the
dimension of $\mbox{Ext}^1_{\C_{(S,M)}}(X_\gz,X_\da)$, we can study
$\C_{(S,M')}$ instead of studying $\C_{(S,M)}.$

Note that curves in $(S,M)$ can be viewed as curves in $(S,M'),$ and
their intersection numbers do not change. For convenience, we denote
$\ff(w(\ggz',\gz))$ (resp. $\fs(w(\ggz',\gz))$) by $\ff'(\gz)$
(resp. $\fs'(\gz)$) for each curve $\gz$ in $(S,M).$

\begin{lem}\label{admissible pair}Let $\gz$ and $\da$ be two curves in $(S,M)$, then each intersection of $\gz$ and $\da$ induces
an admissible pair either in $\ff'(\gz)\times \fs'(_s\da_e)$ or in $\ff'(\da)\times\fs'(_s\gz_e).$
\end{lem}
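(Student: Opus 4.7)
The plan is to analyse the local picture around each intersection, use the refined triangulation $\ggz'$ to read off strings, and verify the admissibility conditions of Theorem~\ref{C-B} by pivoting the endpoints of one of the two curves. Fix an intersection point $A_i$ of $\gz$ and $\da$. By the choice of $\ggz'$ (with its additional arcs $p^1p^2$ near each endpoint $p\in\{s(\gz),e(\gz),s(\da),e(\da)\}$), the arcs of $\ggz'$ separate $A_i$ from the endpoints of both $\gz$ and $\da$. Starting at $A_i$ and tracing $\gz$ and $\da$ in both directions along the arcs of $\ggz'$, I would identify the maximal common segment, namely the maximal stretch during which both curves cross the same sequence of arcs of $\ggz'$. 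This produces a common subword $w_i$ of $w(\ggz',\gz)$ and $w(\ggz',\da)$ (possibly after inverting one of them); the two alternatives correspond to the $w_0=v_0$ versus $w_0^{-1}=v_0$ cases in the definition of an admissible pair.

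Next I would compare the strings $w(\ggz',\da)$ and $w(\ggz',{}_s\da_e)$: the elementary pivots at the endpoints of $\da$ only alter the first and last letters of the string, the ones corresponding to arcs crossed near each endpoint. In particular $w_i$ survives as a common subword of $w(\ggz',\gz)$ and $w(\ggz',{}_s\da_e)$, and by maximality of the common segment it sits in $w(\ggz',\gz)$ as factor-string data $(E_1,w_i,F_1)\in\ff'(\gz)$ and in $w(\ggz',{}_s\da_e)$ as substring data $(E_2,w_i,F_2)\in\fs'({}_s\da_e)$. Running the same construction with the roles of $\gz$ and $\da$ exchanged produces a candidate pair in $\ff'(\da)\times\fs'({}_s\gz_e)$.

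The core step is then the verification of the direct/inverse conditions on the outer letters of $E_j,F_j$. At each end of the common segment the curves $\gz$ and $\da$ leave a shared arc and enter two different sides of some triangle of $\ggz'$; whether the neighbouring letter of each string is an arrow of $Q_{\ggz'}$ or its formal inverse is completely determined by the clockwise orientation used to define the arrows of $Q_{\ggz'}$. A direct case analysis at the two divergence points shows that in exactly one of the two possible clockwise/counter-clockwise configurations the constructed pair is admissible in $\ff'(\gz)\times\fs'({}_s\da_e)$, while in the reverse configuration one obtains an admissible pair in $\ff'(\da)\times\fs'({}_s\gz_e)$. Either way, the intersection $A_i$ induces one of the two required admissible pairs.

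The main obstacle I foresee is the boundary case in which $w_i$ extends all the way to an endpoint of $\gz$ or $\da$: there the ``next letter'' needed for the admissibility check lives on an arc of $\ggz'\setminus\ggz$, and the elementary pivot $\da\rightsquigarrow{}_s\da_e$ is needed precisely to rewrite that letter with the correct direct/inverse type. Checking these configurations around each of the four endpoints, together with the degenerate sub-cases in which $A_i$ itself lies inside one of the triangles cut off by the arcs $p^1p^2$, is where the added marked points of $M'$ do the real work.
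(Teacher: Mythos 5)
Your strategy matches the paper's: pass to the refined triangulation $\ggz'$, extract a common subword at each intersection, and verify the factor-string/substring conditions. However, the step you treat as routine is exactly where the paper has to be careful. The claim that the elementary pivots ``only alter the first and last letters of the string'' is false in general: moving an endpoint of $\da$ can strip off (or append) an entire maximal direct or inverse segment, not a single letter. The paper handles this by writing $w(\ggz',\da)=E'_2\az^{-1}w\az_1F'_2$ and then observing that $w(\ggz',{}_s\da_e)=E''_2\,w\,F''_2$ where $E''_2$ (resp. $F''_2$) may have length zero --- precisely when $E'_2=V(\az)^{-1}$ (resp. $F'_2=U(\az_1)$). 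The common subword $w$ does survive, but because the deleted segment lies entirely outside $w$, not because only one letter changed; as written, your argument would not survive a pivot that removes a long inverse prefix of $E_2$.

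Your closing paragraph also conflates the two mechanisms in play. The added marked points of $M'$ (and the arcs of $\ggz'\setminus\ggz$ near each endpoint) are there to force $l(E_1),l(E_2),l(F_1),l(F_2)>0$, so that the divergence of $\gz$ and $\da$ at both ends of $w$ is witnessed by genuine arcs of $\ggz'$ and the factor/substring letter types can be read off locally; they do not ``rewrite a letter'' for the pivot. The pivot $\da\rightsquigarrow{}_s\da_e$ is needed for a different reason: the target of the eventual computation is $\Ext^1_{\C}(X_\gz,X_\da)\cong\Hom(M(\ggz',\gz),M(\ggz',{}_s\da_e))$, so the second entry of the admissible pair must be a substring of $w(\ggz',{}_s\da_e)$, not of $w(\ggz',\da)$ --- in the configuration the paper fixes, $(E_2,w,F_2)$ is already a substring of $w(\ggz',\da)$ before pivoting. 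Finally, your case analysis tacitly uses that only two of the four a priori direct/inverse patterns at the two divergence points can occur; this is forced by the fact that $\gz$ and $\da$ genuinely \emph{cross} at $A_i$ and hence switch sides along the shared arcs, and is worth stating, since it is the topological input that makes the admissibility conditions at the two ends compatible.
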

\begin{proof} We fix orientations of $\gz$ and $\da,$ and take one intersection $A$ of them.
Note that $\gz$ and $\da$ play a same role to each other, we only prove the case when $\gz$ and $\da$ can be described locally as follows (related to $A$):
\begin{center}
\includegraphics[height=1.9in]{3-7.jpg}
\end{center}
In the above situation, we are going to find an admissible pair in $\ff'(\gz)\times \fs'(_s\da_e)$ induced by $A.$
By the construction of the triangulation $\ggz'$, we have $\gz, \da\not\in\ggz',$ and $s\geq 1$ which means the intersection
induces a common subword
$$w=j_1\frac{~~~~~~}{}j_2\frac{~~~~~~}{}\cdots\frac{~~~~~~}{} j_s$$
of $w(\ggz',\gz)$ and $w(\ggz',\da).$
Assume $w(\ggz',\gz)=E_1wF_1$ and $w(\ggz',\da)=E_2wF_2$, then $l(E_1)\neq 0\neq l(E_2)$ and $l(F_1)\neq 0\neq l(F_2)$ by
 the construction of $\ggz'$.

Note that $l(E_2)\neq 0 \neq l(F_2)$ implies that $i_3$ and $i_1$
are two arcs which induce two arrows $\az$ and $\az_1$ such that we
can write $w(\ggz',\da)=E_2wF_2=E'_2\az^{-1}w\az_1F'_2$. Therefore
$w(\ggz',{_s\da}_e)=E^{''}wF^{''}_2$ with $l(E^{''}_2)\geq 0$,
$l(F^{''}_2)\geq 0$. And $l(E^{''}_2)=0$ (resp. $l(F^{''}_2)=0$) if
and only if $E'_2=V(\az)^{-1}$ (resp. $F'_2=U(\az_1)$). Therefore
$(E^{''}_2,w,F_2'')\in \fs'(_s\da_e)$. Hence the intersection $A$
induces an admissible pair
    $$(E_1,w,F_1)\times(E^{''}_2,w,F^{''}_2)\in\ff'(\gz)\times \fs'(_s\da_e).$$

\end{proof}

\begin{rem}If $\emph{\Int}(\gz,\da)\neq 0$ with $_s\da_e\in\ggz'$, then ${_s\gz}_e\not\in\ggz'$ and
their intersections yield admissible pairs in
$\ff'(\da)\times\fs'(_s\gz_e).$
  \end{rem}
The following theorem is the main result in this section.

\begin{thm}\label{thm1}
Let $\gz, \da$ be curves (which are not necessarily different) in $(S,M),$ then
$$\emph{\mbox{dim}}_k \emph{\Ext}^1_{\C_{(S,M)}}(X_\gz,X_\da)=\emph{\Int}(\gz,\da).$$
\end{thm}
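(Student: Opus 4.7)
The plan is to reduce the computation of $\Ext^1_{\C_{(S,M)}}(X_\gz, X_\da)$ to a Hom-computation in the Jacobian string algebra, then to apply Crawley-Boevey's basis theorem (Theorem~\ref{C-B}) together with Lemma~\ref{admissible pair}. First, I would use the reductions already established in the text: by Lemma~\ref{Ext=} and Proposition~\ref{propMP}, one may replace $(S,M)$ by the enlarged marked surface $(S,M')$ with the refined triangulation $\Gamma'$, getting
$$\Ext^1_{\C_{(S,M)}}(X_\gz, X_\da) \cong \Ext^1_{\C_{(S,M')}}(X'_\gz, X'_\da),$$
while $\Int(\gz,\da)$ is unchanged. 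The refinement is designed so every crossing of $\gz$ and $\da$ produces a common subword in $w(\Gamma',\gz)$ and $w(\Gamma',\da)$.

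The next step is to convert $\Ext^1$ to $\Hom$ via the shift formula $X_\gz[1] = X_{{}_s\gz_e}$ from \cite{BZ11}, giving
$$\Ext^1_{\C_{(S,M')}}(X'_\gz, X'_\da) = \Hom_{\C_{(S,M')}}(X'_\gz, X'_{{}_s\da_e}).$$
The 2-Calabi-Yau duality $\Ext^1(X,Y) \cong D\Ext^1(Y,X)$, combined with the Remark following Lemma~\ref{admissible pair}, allows me to assume without loss of generality that ${}_s\da_e \notin \Gamma'$ (otherwise I swap the roles of $\gz$ and $\da$ and dualize). Under this assumption $X'_{{}_s\da_e}$ is not a summand of the cluster tilting object $T_{\Gamma'}$, so the equivalence $\C/\add T_{\Gamma'} \simeq \mod J(Q_{\Gamma'}, W_{\Gamma'})$ identifies $X'_\gz$ and $X'_{{}_s\da_e}$ with the string modules $M(\Gamma', \gz)$ and $M(\Gamma', {}_s\da_e)$ respectively.

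At this point Crawley-Boevey's theorem produces a basis of the module Hom-space $\Hom_{J(Q_{\Gamma'},W_{\Gamma'})}(M(\Gamma',\gz), M(\Gamma',{}_s\da_e))$ indexed by admissible pairs in $\ff'(\gz) \times \fs'({}_s\da_e)$. Lemma~\ref{admissible pair} supplies an injection from the intersection points of $\gz$ and $\da$ into these admissible pairs; for the reverse direction I would verify that every admissible pair corresponds to a crossing -- a combinatorial check in which the extra arcs inserted near endpoints by $\Gamma'$ serve precisely to rule out spurious admissible pairs at the string boundaries. Assembling these ingredients yields
$$\dim_k \Ext^1_{\C_{(S,M)}}(X_\gz, X_\da) = \Int(\gz, \da).$$

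The main obstacle I anticipate lies in the passage from module $\Hom$ to cluster-category $\Hom$: a priori $\Hom_\C(X'_\gz, X'_{{}_s\da_e})$ may acquire extra morphisms factoring through $T_{\Gamma'}$, and one must show these do not contribute additional dimensions once ${}_s\da_e \notin \Gamma'$. A second delicate point is the converse of Lemma~\ref{admissible pair}: while every intersection manifestly produces an admissible pair, excluding ``phantom'' admissible pairs requires careful analysis of the possible common subwords of $w(\Gamma', \gz)$ and $w(\Gamma', {}_s\da_e)$ against the topology of the refined marked surface, and is the place where the insertion of $p^1,p^2$ around each endpoint $p$ is essential.
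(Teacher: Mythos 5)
The key step in your plan fails. You propose to reduce $\Hom_{\C'}(X'_\gz, X'_{{}_s\da_e})$ to the module Hom-space $\Hom_{J(Q_{\Gamma'},W_{\Gamma'})}(M(\Gamma',\gz), M(\Gamma',{}_s\da_e))$ by arguing that, once ${}_s\da_e\notin\Gamma'$, the morphisms factoring through $\add T_{\Gamma'}$ "do not contribute additional dimensions." But that implication is false: ${}_s\da_e\notin\Gamma'$ only guarantees that $X'_{{}_s\da_e}$ is not a summand of $T_{\Gamma'}$; it does not prevent morphisms $X'_\gz\to X'_{{}_s\da_e}$ from factoring through a summand of $T_{\Gamma'}$, and in general that subspace is nonzero. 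So your count would undershoot.

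The paper resolves this by using the splitting
$$\Hom_{\C'}(X_\gz,X_\da[1]) \cong (\add T_{\ggz'})(X_\gz,X_\da[1]) \oplus \Hom_{\C'/T_{\ggz'}}(X_\gz,X_\da[1]),$$
and then invoking Lemma 3.3 of \cite{P08} to identify the first summand dually as $D\Hom_{\C'/T_{\ggz'}}(X_\da, X_\gz[1])$. This gives \emph{two} module Hom-spaces,
$$\Hom_{J}(M(\ggz',\gz),M(\ggz',{_s\da}_e)) \oplus D\Hom_{J}(M(\ggz',\da),M(\ggz',{_s\gz}_e)),$$
each of which is counted by Crawley-Boevey's basis theorem. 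This two-piece decomposition is not an artifact: Lemma \ref{admissible pair} says a given crossing produces an admissible pair in \emph{one} of $\ff'(\gz)\times\fs'({}_s\da_e)$ or $\ff'(\da)\times\fs'({}_s\gz_e)$, and different crossings of the same pair $\gz,\da$ can land in different pieces depending on local orientation. Your WLOG step (swap $\gz$ and $\da$ and dualize) can choose one piece globally but cannot make a per-crossing choice, so it necessarily loses some of the basis elements. You need the Palu duality argument, or an equivalent bookkeeping device that simultaneously tracks both types of crossings.
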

\begin{proof}
Let $\C'=\C_{(Q_{\ggz'},W_{\ggz'})}$, it suffices to prove
that $$\mbox{dim}_k \Ext^1_{\C'}(X_\gz,X_\da)=\mbox{Int}(\gz,\da).$$

We know that $$\Ext^1_{\C'}(X_\gz,X_\da)=\Hom_{\C'}(X_\gz,
X_\da[1])\cong (\add T_{\ggz'})(X_\gz,X_\da[1])\oplus
\Hom_{\C'/T_{\ggz '}}(X_\gz,X_\da[1])$$ where $(\add
T_{\ggz'})(X_\gz,X_\da[1])$ is the subspace of
$\Hom_{\C'}(X_\gz,X_\da[1])$ consisting of morphisms factoring
through an object in $\add T_{\ggz'}$, and $T_{\ggz'}$ is the
cluster tilting object corresponding to $\ggz'$ in $\C'$.

It follows from Lemma 3.3 in \cite{P08} that $$(\add
T_{\ggz'})(X_\gz,X_\da[1])\cong
D\Hom_{\C'/T_{\ggz'}}(X_\da,X_\gz[1]).$$

Therefore $\Ext^1_{\C'}(X_\gz,X_\da)$ can be decomposed as
$k$-vector space as follows
$$\Hom_{J(Q_{\ggz'},W_{\ggz'})}(M({\ggz'},\gz),M(\ggz',{_s\da}_e)) \oplus D\Hom_{J(Q_{\ggz'},W_{\ggz'})}(M(\ggz',\da),M(\ggz',{_s\gz}_e)).$$

Note that the definition of $w(\ggz', \gz)$ and $w(\ggz',\da)$ guarantees that
different intersections of $\gz$ and $\da$ yield different admissible pairs in $\ff'(\gz)\times \fs'(_s\da_e)$ or $\ff'(\da)\times\fs'(_s\gz_e).$

It follows from Theorem \ref{C-B} that
$$\dim_k\Hom_{J(Q_{\ggz'},W_{\ggz'})}(M({\ggz'},\gz),M(\ggz',{_s\da}_e))=\mid \ff'(\gz)\times \fs'(_s\da_e)\mid
$$
and
$$\dim_k\Hom_{J(Q_{\ggz'},W_{\ggz'})}(M(\ggz',\da),M(\ggz',{_s\gz}_e))=\mid\ff'(\da)\times\fs'(_s\gz_e)\mid.
$$
By Lemma \ref{admissible pair}, it suffices to show that each
admissible pair in $\ff'(\gz)\times \fs'(_s\da_e)$ or
$\ff'(\da)\times \fs'(_s\gz_e)$ can be induced by an intersection of
$\gz$ and $\da.$ Without loss of generality, we take an admissible
pair $(E_1,w,F_1)\times (E_2,w,F_2)\in\ff'(\gz)\times
\fs'(_s\da_e).$ By the orientation of the surface, we can have the
following picture in mind.
\begin{center}
\includegraphics[height=2in]{3-8.jpg}
\end{center}
It is easy to see that if $E_1=E_2$ or $E_2=F_2$,
then $t(\gz)=t(_s\da_e)$ or $s(\gz)=s(_s\da_e)$ which is impossible, since $e(\gz), s(\gz)\in M$ but $e(_s\da_e),s(_s\da_e)\in M'\backslash M$. So
$E_1\neq E_2$ and $F_1\neq F_2$, then the admissible pair is induced by an intersection of $\gz$ and $_s\da_e$ which can also be viewed
as an intersection of $\gz$ and $\da$ by definition of elementary pivot move and the structure of $\ggz'.$
This completes the proof.
\end{proof}

\begin{rem}let $\gz$ be a curve with
self-intersection in $(S,M)$ as follows. Then
$\emph{\Int}(\gz,\gz)=2,$
\begin{center}
\includegraphics[height=2in]{3-9.jpg}
\end{center}
where $\gz'$ is a curve in homotopy class of $\gz$ such that the
intersection number of  $\gz$ and $\gz'$ is minimal. Note that
$\emph{\Int}(\gz,\gz)$ may be different from the ordinary notion of
self-intersection number of $\gz$.
\end{rem}

\section{Classification of cotorsion pairs in $\C_{(S,M)}$}

Let $(S,M)$ be a marked surface without punctures and $\C_{(S,M)}$
be the corresponding cluster category. We show in this section that
there is no non-trivial $t-$structures in $\C_{(S,M)}$ when $S$ is
connected. Moreover, we give a classification of cotorsion pairs
with a fixed core in $\C_{(S,M)}$. Recall that we denote by $\X_V$
the subcategory of $\C_{(S,M)}$ corresponding to a collection $V$ of
curves and valued closed curves in $(S,M)$.

\begin{lem}\label{move}
Let $\X_V$ be a subcategory of $\C_{(S,M)}$ closed under extensions
and the shift functor $[1]$. Then for any curve $\gamma\in V$ and
any positive integer $m$, ${_{s^m}\gamma}_{e^m}$, $_{s^m}\gamma$,
$\gamma_{e^m}$ are also in $V$.
\end{lem}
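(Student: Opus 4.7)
The plan is to use only the two structural facts recalled at the end of Section~\ref{marked surface}: the identity $X_\gamma[1]=X_{{}_s\gamma_e}$, and the Auslander--Reiten triangle
$$X_{{}_s\gamma_e}\longrightarrow X_{{}_s\gamma}\oplus X_{\gamma_e}\longrightarrow X_\gamma\longrightarrow X_{{}_s\gamma_e}[1]$$
attached to any string object. No input from Section~3 or from the intersection/extension correspondence is needed.

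Iterating the shift identity gives $X_\gamma[m]=X_{{}_{s^m}\gamma_{e^m}}$ for every $m\geq 0$, so the hypothesis that $\X_V$ is closed under $[1]$ already forces ${}_{s^m}\gamma_{e^m}\in V$ for all $m\geq 1$. This disposes of the symmetric case.

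For the asymmetric moves I would induct on $m$. In the base case $m=1$, the two outer terms of the AR-triangle above, namely $X_{{}_s\gamma_e}=X_\gamma[1]$ and $X_\gamma$, both lie in $\X_V$; extension-closure of $\X_V$ then places the middle term $X_{{}_s\gamma}\oplus X_{\gamma_e}$ in $\X_V$, and the standing convention that subcategories are closed under direct summands gives ${}_s\gamma,\gamma_e\in V$. For the inductive step, assume $\gamma_{e^m}\in V$. Applying the AR-triangle with $\gamma_{e^m}$ replacing $\gamma$, its left term is $X_{{}_s\gamma_{e^{m+1}}}=X_{\gamma_{e^m}}[1]\in\X_V$ (by shift-closure applied to $\gamma_{e^m}$) and its right term $X_{\gamma_{e^m}}$ is in $\X_V$ by the inductive hypothesis, whence extension- and summand-closure yield $\gamma_{e^{m+1}}\in V$. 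The mirror argument starting from ${}_{s^m}\gamma\in V$ produces ${}_{s^{m+1}}\gamma\in V$, completing the induction.

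The main obstacle is essentially absent; the only care required is the bookkeeping of endpoints under the shift, i.e.\ verifying that shifting $\gamma_{e^m}$ indeed yields the curve ${}_s\gamma_{e^{m+1}}$ and not some other relative of $\gamma$, which is immediate from the definition of the elementary pivot moves. Note also that the argument never needs to invoke the AR-triangle for a band object, since the curves being moved are genuine (open) curves, so the AR-triangle formula recalled in Section~\ref{marked surface} applies directly.
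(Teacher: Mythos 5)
Your proof is correct and follows essentially the same route as the paper's: close under $[1]$ for the symmetric move, then use extension-closure and summand-closure together with the Auslander--Reiten triangle ending at $X_\gamma$ for the base case, and induct on $m$. You simply spell out the inductive step, which the paper leaves implicit with ``by induction on $m$.''
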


\begin{proof}
Since $\X_V$ is closed under the shift functor $[1]$, then for any
curve $\gamma\in V$, $X_{_s\gamma_e}=X_\gamma[1]$ is also in $\X$.
Then the objects $X_{_s\gamma}$ and $X_{\gamma_e}$ are also in $\X$
since $\X$ is closed under extensions, and $X_{{}_s\gamma}\oplus
X_{\gamma_e}$ is the middle item of the AR-triangle ending at
$X_\gamma$. By induction on $m$ , we have the assertion.
\end{proof}

For any curve $\gamma$ in $(S,M)$, we denote by $\B(\gamma)$ the set
of boundaries where the endpoints of $\gz$ lie on. Set
$B(b,\lambda):=\emptyset$ for a closed curve $b$. For a collection
$V$ of curves and valued closed curves, let
$$\B(V)=\displaystyle\bigcup_{\da\in V}\B(\da).$$

\begin{lem}\label{boundary}
Let $(\X_V,\X_W)$ be a t-structure in $\C_{(S,M)}$. Then $\B(V)\bigcap\B(W)=\emptyset$.
\end{lem}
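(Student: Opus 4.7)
I plan to argue by contradiction, combining Theorem~\ref{thm1} (identifying $\dim_k\Ext^1$ with geometric intersection number) with Lemma~\ref{move} (producing infinitely many pivots of a single curve inside $V$) to detect a forbidden extension between $\X_V$ and $\X_W$. Assume, toward a contradiction, that some boundary component $C$ lies in $\B(V)\cap\B(W)$, and pick $\gamma\in V$ and $\delta\in W$ each having an endpoint on $C$; relabeling the endpoints of $\gamma$ if necessary, I may assume $s(\gamma)\in C$.

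Next I check that the hypotheses of Lemma~\ref{move} are satisfied by $\X_V$. Closure of $\X_V$ under $[1]$ is part of the definition of a $t$-structure, and closure under extensions follows because $(\X_V,\X_W[1])$ is the associated torsion pair, so $\X_V={}^{\bot}(\X_W[1])$. Lemma~\ref{move} then gives ${}_{s^m}\gamma\in V$ for every $m\geq 0$, so $V$ contains an infinite family of pivots of $\gamma$ whose $s$-endpoint cycles counterclockwise through the marked points of $C$.

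The key geometric step, and the main obstacle, is to show that $\Int({}_{s^m}\gamma,\delta)\geq 1$ for $m$ large enough (concretely, one may take $m\geq n:=|M\cap C|$). The intuition is that after $n$ successive $s$-pivots the endpoint $s(\gamma)$ has cycled back to its original position, but inside a small collar neighborhood of $C$ the curve has picked up an extra loop that winds once around $C$. Because $\delta$ has an endpoint on $C$ and is not homotopic to a boundary segment, its initial segment must leave any collar of $C$ transversally, and must therefore be crossed by this winding loop in a way that cannot be removed by a homotopy rel $M$. I would make this precise by fixing a collar $N$ of $C$, analyzing the homotopy class of ${}_{s^m}\gamma$ inside $N$, and using a minimal-crossings (or universal-cover) argument to rule out homotopic cancellation of the interior crossings.

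With the geometric claim in hand, Theorem~\ref{thm1} gives $\dim_k\Ext^1_{\C_{(S,M)}}(X_{{}_{s^m}\gamma},X_\delta)=\Int({}_{s^m}\gamma,\delta)\geq 1$. But $X_{{}_{s^m}\gamma}\in\X_V$ and $X_\delta\in\X_W$, while the cotorsion pair condition $\Ext^1(\X_V,\X_W)=0$ forces this $\Ext^1$ to vanish, giving the desired contradiction and hence $\B(V)\cap\B(W)=\emptyset$.
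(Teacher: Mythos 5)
Your proposal follows the same strategy as the paper's proof: argue by contradiction, use Lemma~\ref{move} to produce an infinite family of pivots of $\gamma$ inside $V$, establish a geometric crossing with $\delta$, and invoke Theorem~\ref{thm1} to convert that crossing into a nonzero $\Ext^1$ between $\X_V$ and $\X_W$. The only difference is the shape of the intermediate geometric claim: the paper fixes the endpoint $a_i$ of $\delta$ on the shared boundary and shows that $\delta$ must cross one of the two pivots $\gamma_{e^{i-1}}$ or $\gamma_{e^{i+n+1}}$ (landing at the marked points adjacent to $a_i$, the second having wound once more), whereas you use a single pivot ${}_{s^m}\gamma$ with enough winding; both versions are left at roughly the same informal level, the paper deferring to a figure and you to a sketched collar/universal-cover argument. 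One small caveat: your concrete bound $m\geq n=|M\cap C|$ may not quite suffice when $\delta$ and $\gamma$ share the endpoint $s(\gamma)$ on $C$, because a curve that winds exactly one extra turn around the collar need not cross a curve emanating from the same basepoint; taking a somewhat larger $m$ (say $m\geq 2n$), or simply keeping the statement ``for $m$ sufficiently large,'' removes this edge case without changing the structure of the argument.
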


\begin{proof}
Assume that $\B(V)\bigcap\B(W)\neq\emptyset$. Then there are two
curves $\gamma\in V$, $\da\in W$ and a boundary $B$ such that
$B\in\B(\gamma)\bigcap\B(\da)$. Let $a_0=e(\gz), a_1, \ldots,
a_{n-1}$ be all the marked points on $B$ with anticlockwise order.
Then the definition of pivot moves implies
$e(\gamma_{e^i})=a_i=a_{i+n}$.
Then any curve with endpoint $a_i$
crosses $\gamma_{e^{i-1}}$ or $\gamma_{e^{i+n+1}}$ for $i\geq1$ (see
the following figure for the case $i=1$).
%%%%%%%%%%%%%%%%%%%%%%%%%%%%%%%%%%%%%%%%%%%%%%%%%%%%%%%%%%%%%%%%%%%%%%%%ȱһ¸öͼ4-1
\begin{center}
\includegraphics[height=2in]{4-0.jpg}
\end{center}
%%%%%%%%%%%%%%%%%%%%%%%%%%%%%%%%%%%%%%%%%%%%%%%%%%%%%%%%%%%%%%%%%%%%%%%%ȱһ¸öͼ4-1
By Lemma \ref{move}, both $\gamma_{e^{i-1}}$ and
$\gamma_{e^{i+n+1}}$ are in $V$. Hence, there exists $m\geq 0$ such
that $\Int(\gz_{e^m}, \da )\neq 0$, where $\gz_{e^m}\in V$. Theorem
\ref{thm1} implies $\Ext^1_{\C{(S,M)}}(X_{\gz_{e^m}}, X_{\da})\neq0$
which contradicts to the definition of $t-$structure.
\end{proof}

The following theorem is the first main result in this section.

\begin{thm}\label{thm2}
If $S$ is connected, then the $t-$structures in $\C_{(S,M)}$ are $(\C_{(S,M)},0)$ or $(0,\C_{(S,M)})$.
\end{thm}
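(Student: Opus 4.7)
The argument is by contradiction: assume $(\X_V,\X_W)$ is a $t$-structure with both $V\neq\emptyset$ and $W\neq\emptyset$, and construct curves $\gz'\in V$ and $\da'\in W$ with $\Int(\gz',\da')\neq 0$; by Theorem~3.5 this yields $\Ext^1(X_{\gz'},X_{\da'})\neq 0$, contradicting the $t$-structure axiom $\Ext^1(\X_V,\X_W)=0$.

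I first reduce to the case where both $V$ and $W$ contain a (string) curve. If, for example, $V$ consisted only of valued closed curves, then $\X_V$ would be supported on band objects, which are fixed by the shift. Pick any curve $\gamma$ crossing one of the bands in $V$ (such a curve exists by non-triviality of the bands). The decomposition $\C_{(S,M)}=\X_V*\X_W[1]$ applied to $X_\gamma$ forces it into a triangle $X_b\to X_\gamma\to Y[1]\to X_b[1]$ with $X_b$ a sum of bands in $\X_V$ and $Y\in\X_W$; the connecting morphism lies in $\Ext^1(\X_W,\X_V)$, which vanishes because by Theorem~3.5 no band in $V$ intersects any curve in $W$. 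Hence the triangle splits, forcing $X_\gamma\in\X_V$ (impossible, as $\gamma$ is a string) or $X_\gamma\in\X_W[1]$, i.e. $\gamma[-1]\in W$, which also crosses the band and contradicts the previous vanishing. Thus $V$ contains a curve; symmetrically so does $W$.

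Now fix curves $\gz\in V$ and $\da\in W$. Lemma~4.2 gives $\B(\gz)\cap\B(\da)=\emptyset$, and we may select boundary components $B\in\B(\gz)\subset\B(V)$ and $B'\in\B(\da)\subset\B(W)$. Since $S$ is connected, there is a curve $\eta$ in $(S,M)$ whose endpoints lie on $B$ and on $B'$. The decomposition $\C_{(S,M)}=\X_V*\X_W[1]$ produces a triangle $X\to X_\eta\to Y[1]\to X[1]$ with $X\in\X_V$ and $Y\in\X_W$, which is necessarily nontrivial: $X=0$ would put $\eta[-1]\in W$ with an endpoint on $B\in\B(V)$, contradicting Lemma~4.2, and symmetrically $Y=0$ violates Lemma~4.2 at $B'$.

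The crux, and the main obstacle, is to convert this nontrivial triangle into an explicit intersecting pair. From $X\to X_\eta$ nonzero I pick an indecomposable summand $X_{\gz'}$ of $X$ with $\gz'\in V$, and from $X_\eta\to Y[1]$ nonzero a summand $X_{\da'}$ of $Y$ with $\da'\in W$; the latter already gives $\Int(\eta,\da')\neq 0$ via Theorem~3.5. I then use Lemma~4.1 (and its dual, which applies to $\X_W$ because it is closed under $[-1]$ and extensions) to slide $\gz'$ forward within $V$ and $\da'$ backward within $W$, combined with the $2$-Calabi--Yau duality $\Ext^1(X_{\gz'},X_{\da'})\cong D\Ext^1(X_{\da'},X_{\gz'})$ and the geometric fact that $\eta$ topologically bridges $\B(V)$ and $\B(W)$ in the connected surface $S$, to force a pivoted pair to cross, producing the required contradiction $\Ext^1(\X_V,\X_W)\neq 0$ through Theorem~3.5.
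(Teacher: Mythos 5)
Your proposal diverges from the paper's route and has two genuine gaps. The first is in the reduction step: you invoke Theorem~\ref{thm1} to conclude both that ``no band in $V$ intersects any curve in $W$'' and that the relevant connecting morphism therefore vanishes — but Theorem~\ref{thm1} is stated and proved only for \emph{curves} (string objects) and says nothing about band objects. The vanishing you need, namely of the connecting morphism in $\Hom(Y[1],X_b[1])$ with $X_b$ a band in $\X_V$ and $Y\in\X_W$, follows instead from the $2$-Calabi--Yau duality together with the shift-invariance of band objects (this is precisely what the paper does in cases~(2) and~(3) of its proof); likewise, the claim that ``$\gamma[-1]$ also crosses the band and contradicts the previous vanishing'' again presupposes a band version of Theorem~\ref{thm1} that is not available. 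The second, more serious gap is the final paragraph, which you yourself flag as ``the crux, and the main obstacle'': it is a list of ingredients (Lemma~\ref{move}, its dual, $2$-CY duality, topological bridging by $\eta$) rather than an argument. No explicit pivot sequence is produced, and no reason is given why sliding $\gz'$ and $\da'$ must eventually create an intersection; a priori $\gz'$ and $\da'$ need not be anywhere near $\eta$ beyond having a nonzero morphism to/from it.

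The paper sidesteps the ``crux'' entirely by first establishing $\Ext^1(\X_W[1],\X_V)=0$. For string--string pairs this follows from Theorem~\ref{thm1} plus a key observation enabled by Lemma~\ref{boundary}: since $\B(\gz)\cap\B(\da)=\emptyset$, the pivot ${}_s\da_e$ moves endpoints of $\da$ only along boundaries avoided by $\gz$, so $\gz$ not crossing $\da$ forces $\gz$ not to cross ${}_s\da_e$ either. For pairs involving a band, $2$-CY duality and shift-invariance give the vanishing directly. With this in hand, every decomposition triangle splits, so every indecomposable lies in $\X_V$ or $\X_W[1]$; connectedness of $S$ together with $\B(V)\cap\B(W)=\emptyset$ puts all curves on one side, and the cluster-tilting object on that side pulls in the whole category. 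The observation that boundary disjointness lets you control intersections with \emph{shifts} of curves, not just the curves themselves, is exactly the leverage your sliding argument was groping for; I recommend rebuilding the proof around it rather than trying to force a crossing by hand.
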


\begin{proof}

Suppose that $(\X_V,\X_W)$ is a $t-$structure in $\C_{(S,M)}$. We first prove $$\Ext^1(\X_W[1],\X_V)=0$$ in the following three cases:
\begin{itemize}
  \item[$(1)$]Let $X_\gz$ and $X_\da$ be two string objects in $\C_{(S,M)}$ with $\gamma\in V$ and $\delta\in W.$ Since $\Ext^1(X_{\gamma},X_{\delta})=0$ by the definition of t-structure, $\gamma$ does not cross $\delta$ by Theorem \ref{thm1}. Then $\gamma$ does not cross $_s\delta_e$ neither, since $\B(\gamma)\bigcap\B(\delta)=\emptyset$ by Lemma \ref{boundary} and $_s\delta_e$ is obtained from $\delta$ by moving the endpoints of $\delta$ along the boundary. Then $\Ext^1(X_{\delta}[1],X_{\gamma})=0$ by Theorem \ref{thm1}.

   \item[$(2)$]For any band object $X$ in $\X_V$ and any object $Y$ in $\X_W$, $\Ext^1(X,Y)=0$ implies that
$$\Ext^1(Y[1],X)\cong D\Ext^1(X,Y[1])\cong D\Ext^1(X[-1],Y)\cong D\Ext^1(X,Y)=0$$
since the band object is invariant under $[-1]$.
 \item[$(3)$] For any object $X$ in $\X_V$ and any band object $Y$ in $\X_W$, $\Ext^1(X,Y)=0$ implies that
$$\Ext^1(Y[1],X)\cong D\Ext^1(X,Y[1])\cong D\Ext^1(X,Y)=0$$
since the band object is invariant under $[1]$.
\end{itemize}

Therefore $\Ext^1(\X_W[1],\X_V)=0$. Then any indecomposable object in $\C_{(S,M)}$ is either in $\X_V$ or in $\X_W[1]$.
Hence any curve is either in $V$ or in $W$. Since $\B(V)\bigcap\B(W)=\emptyset$ and $S$ is connected, we have that either $V$ or $W$ contains all curves in $(S,M)$. Without loss of generality, we assume that $V$ contains all curves in $(S,M)$. In particular, all arcs in $\Gamma$ are in $V$. So there is a cluster tilting object $T=\bigoplus_{\gamma\in\Gamma}X_\gamma$ in $\X_V$. Then $\C_{(S,M)}=\add T\ast\add T[1]\subset\X_V$. This completes the proof.
\end{proof}

If the surface $(S,M)$ has $m$ connected components $(S_j,M_j)$, then the corresponding cluster category $\C_{(S,M)}$ is equivalent to the direct sum of cluster categories $\C_{(S_j,M_j)}$. So we have the following corollary.

\begin{cor}\label{cor1}
Let $(S,M)$ be a marked surface with $m$ connected components $(S_j,M_j)$, $1\leq j\leq m$. Then there are $2^m$ $t-$structures in $\C_{(S,M)}$, and they are of the form $$(\bigoplus_{j\in J}\C_{(S_j,M_j)}, \bigoplus_{j\notin J}\C_{(S_j,M_j)})$$
where $J\subseteq\{1,\ldots,m\}$.
\end{cor}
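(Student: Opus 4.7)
The plan is to reduce to the connected case handled by Theorem 4.3, via the direct-sum decomposition
$$\C_{(S,M)} \;\cong\; \bigoplus_{j=1}^m \C_{(S_j,M_j)}.$$
This decomposition holds because every curve or valued closed curve in $(S,M)$ is contained in a single connected component, so the bijection of \cite{BZ11} partitions the indecomposable objects of $\C_{(S,M)}$ according to components. Moreover, Theorem 3.5 combined with the fact that $\Int(\gz,\da)=0$ whenever $\gz$ and $\da$ lie in different components yields $\Ext^1(X_\gz,X_\da)=0$ across components (and similarly for $\Hom$ by $2$-Calabi-Yau duality), so the equivalence holds as triangulated categories.

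Let $(\X,\Y)$ be an arbitrary t-structure in $\C_{(S,M)}$. For each $j$ set $\X_j := \X \cap \C_{(S_j,M_j)}$ and $\Y_j := \Y \cap \C_{(S_j,M_j)}$. Since $\X$ and $\Y$ are closed under direct summands and every indecomposable lies in a unique component, $\X = \bigoplus_j \X_j$ and $\Y = \bigoplus_j \Y_j$. I claim that $(\X_j,\Y_j)$ is a t-structure in $\C_{(S_j,M_j)}$: the vanishing $\Ext^1_{\C_{(S_j,M_j)}}(\X_j,\Y_j)=0$ is inherited from $\C_{(S,M)}$, and the decomposition axiom follows by applying the idempotent projection onto the $j$-th summand (an exact endofunctor of $\C_{(S,M)}$) to the defining triangle $X \to Z \to Y[1] \to X[1]$ attached to an object $Z \in \C_{(S_j,M_j)}$; the outer terms then lie in $\X_j$ and $\Y_j[1]$ respectively. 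Closure of $\X_j$ under $[1]$ is immediate from closure of $\X$ under $[1]$.

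By Theorem 4.3 applied to each connected surface $(S_j,M_j)$, the pair $(\X_j,\Y_j)$ must be one of the two trivial t-structures $(\C_{(S_j,M_j)},0)$ or $(0,\C_{(S_j,M_j)})$. Setting $J := \{\,j : \X_j = \C_{(S_j,M_j)}\,\}$ recovers the asserted form, and conversely any such pair is routinely a t-structure, so varying $J \subseteq \{1,\ldots,m\}$ gives exactly $2^m$ possibilities. The only point requiring care is verifying that an arbitrary t-structure really splits along the direct-sum decomposition; this reduces to the orthogonality of $\Hom$- and $\Ext^1$-spaces between distinct components, which is supplied directly by Theorem 3.5.
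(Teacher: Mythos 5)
Your proof is correct and takes the same approach as the paper, which simply invokes the direct-sum decomposition $\C_{(S,M)}\cong\bigoplus_j\C_{(S_j,M_j)}$ and cites Theorem~\ref{thm2}; you helpfully spell out the step the paper leaves implicit, namely that a $t$-structure splits along the block decomposition via the exact projection functors. One small imprecision: the ``$2$-Calabi-Yau duality'' remark does not give $\Hom$-vanishing between components (it relates $\Ext^1$ to $\Ext^1$ in the two directions); the right observation is that $\Hom(X_\gamma,X_\delta)=\Ext^1(X_\gamma,X_{\delta}[-1])$ and $\delta[-1]$ lies in the same component as $\delta$, and also Theorem~\ref{thm1} covers only string objects so band objects require a separate (but easy) check, though neither point affects the structure of the argument.
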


Let $\X_I$ be a rigid subcategory of $\C_{(S,M)}$. For a subcategory $\X_V\supset \X_I$, we denote by $\overline{\X_V}=\X_V/\X_I$, the subcategory of the triangulated category ${}^{\bot}\X_I[1]/\X_I$. Recall Theorem 3.5 of \cite{ZZ} in our setup: a pair $(\X_{V^1},\X_{V^2})$ of subcategories is a cotorsion pair with core $\X_I$ in $\C_{(S,M)}$ if and only if $\X_I\subset\X_{V^i}\subset{}^{\bot}\X_I[1] \mbox{~for~} i=1,2$,  and $(\overline{\X_{V^1}},\overline{\X_{V^2}})$ is a $t-$structure in ${}^{\bot}\X_I[1]/\X_I.$ This theorem allows us to give a classification of cotorsion pairs  with a fixed core in $\C_{(S,M)}$. We recall that $\V(S,M)$ is the collection of all curves and valued closed curves in
$(S,M)$. By $\V_{I}(S,M)$, we mean the collection of all curves and closed curves in $(S,M)$ which do not belong to $I$ such that
they do not cross any arcs in $I$.
\begin{thm}\label{thm3}
Let $\X_I$ be a rigid subcategory of $\C_{(S,M)}$ such that $(S,M)/I$ has $m$ connected components $(S^I_j,M^I_j)$, $1\leq j\leq m$. Then there is a bijection from the power set of $\{1,2,\ldots,m\}$ to the set of cotorsion pairs in $\C_{(S,M)}$ with core $\X_I$:
$$J\mapsto (\X_{I\cup\bigcup_{j\in J}\V{(S^I_j, M^I_j)}}, \X_{I\cup\bigcup_{j\in J^c}\V{(S^I_j,M^I_j)}})=:(\X(J), \X(J^c))$$
where $J\subseteq\{1,2,\ldots,m\}$ and $J^c=\{1,2,\ldots,m\}\setminus J$. In particular, there are exactly $2^m$ cotorsion pairs with core $\X_I$ in $\C_{(S,M)}$.
\end{thm}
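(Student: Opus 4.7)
The plan is to reduce the classification of cotorsion pairs with core $\X_I$ to the classification of $t$-structures in the Iyama-Yoshino subfactor category, and then to transport these via the Marsh-Palu equivalence to the (possibly disconnected) cluster category associated with the cut surface.

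First I would invoke Theorem 3.5 of \cite{ZZ}, recalled in the paragraph preceding the statement: a pair $(\X_{V^1},\X_{V^2})$ with $\X_I\subset\X_{V^i}\subset{}^{\bot}\X_I[1]$ is a cotorsion pair with core $\X_I$ in $\C_{(S,M)}$ if and only if the images $(\overline{\X_{V^1}},\overline{\X_{V^2}})$ form a $t$-structure in the subfactor triangulated category ${}^{\bot}\X_I[1]/\X_I$. This converts the problem into counting $t$-structures in $\C_{\X_I}$.

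Next I would apply Proposition \ref{propMP} to identify $\C_{\X_I}$ with the cluster category $\C_{(S,M)/I}$ of the cut surface, using the explicit correspondence $X_{\gamma}^{(S,M)}\mapsto X_{\gamma}^{(S,M)/I}$ and $X_{(b,\lambda)}^{(S,M)}\mapsto X_{(b,\lambda)}^{(S,M)/I}$ on indecomposable objects indexed by $\gamma,(b,\lambda)\in\V_I(S,M)$. Since $(S,M)/I$ has $m$ connected components $(S^I_j,M^I_j)$ by hypothesis, the set $\V_I(S,M)$ decomposes as the disjoint union $\bigsqcup_{j=1}^m \V(S^I_j,M^I_j)$ (each curve or valued closed curve not crossing $I$ lives in a unique component of the cut surface), and accordingly
\[
\C_{(S,M)/I}\;\cong\;\bigoplus_{j=1}^m \C_{(S^I_j,M^I_j)}.
\]
Now Corollary \ref{cor1} applies directly: the $t$-structures in this direct sum are in bijection with subsets $J\subseteq\{1,\ldots,m\}$, via
\[
J\;\longmapsto\;\bigl(\bigoplus_{j\in J}\C_{(S^I_j,M^I_j)},\ \bigoplus_{j\notin J}\C_{(S^I_j,M^I_j)}\bigr).
\]

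Finally I would transport each such $t$-structure back through the Marsh-Palu equivalence and through the correspondence of \cite{ZZ} to a cotorsion pair in $\C_{(S,M)}$. On the level of indecomposable objects, the preimage of $\bigoplus_{j\in J}\C_{(S^I_j,M^I_j)}$ under $\pi_I$ together with the core $\X_I$ yields precisely the subcategory $\X(J)=\X_{I\cup\bigcup_{j\in J}\V(S^I_j,M^I_j)}$, and symmetrically for $J^c$; this matches the formula in the statement. The resulting assignment $J\mapsto(\X(J),\X(J^c))$ is a bijection because all three steps (Theorem 3.5 of \cite{ZZ}, Proposition \ref{propMP}, and Corollary \ref{cor1}) are bijective correspondences, so in particular there are exactly $2^m$ cotorsion pairs with core $\X_I$.

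The step that most needs care is the bookkeeping in the translation back from $t$-structures in $\C_{\X_I}$ to cotorsion pairs in $\C_{(S,M)}$: one must verify that the inclusion $\X_I\subset \X(J)\subset{}^{\bot}\X_I[1]$ holds automatically (the right inclusion follows since the members of $\V(S^I_j,M^I_j)$ do not cross arcs of $I$, hence by Theorem \ref{thm1} have no $\Ext^1$ with $\X_I$), and that no cotorsion pair with core $\X_I$ is missed---this is the content of the ``only if'' direction of Theorem 3.5 of \cite{ZZ}, which forces $\X_{V^i}\subset{}^{\bot}\X_I[1]$ and hence places $(\overline{\X_{V^1}},\overline{\X_{V^2}})$ in the framework where Corollary \ref{cor1} is exhaustive.
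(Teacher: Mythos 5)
Your proposal is correct and follows essentially the same route as the paper's proof: reduce to $t$-structures in the Iyama--Yoshino subfactor ${}^{\bot}\X_I[1]/\X_I$ via Theorem~3.5 of \cite{ZZ11}, identify this subfactor with $\C_{(S,M)/I}$ via Proposition~\ref{propMP}, decompose the latter into a direct sum over the connected components, and apply Corollary~\ref{cor1}. Your extra remark that ${}^{\bot}\X_I[1]=\X_{I\cup\V_I(S,M)}$ is ultimately justified by Theorem~\ref{thm1} is a correct unpacking of what the paper dismisses as holding ``by definition.''
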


\begin{proof}

Note that $\V_{I}(S,M)=\bigcup_{j=1}^m\V(S^I_j,M^I_j)$ and ${}^{\bot}\X_I[1]=\X_{I\cup\V_{I}(S,M)}$ by definition. Therefore
$I\subset V \subset I\cup\bigcup_{j=1}^m\V(S^I_j,M^I_j)$ if and only if $\X_I\subset\X_{V}\subset{}^{\bot}\X_I[1]$. On the other hand, the equivalent functor in Proposition \ref{propMP} implies a bijection between $t-$structures
$(\overline{\X_{V^1}},\overline{\X_{V^2}})$ in ${}^{\bot}\X_I[1]/\X_I$ and $t-$structures $(\X_{V^1\setminus I},\X_{V^2\setminus I})$ in $\C_{(S,M)/I}.$
Combine this together with Corollary \ref{cor1} and Theorem 3.5 in \cite{ZZ11}, we get the proof.

\end{proof}

\begin{exm}\label{ex1}
Let $(S,M)$ be obtained from a sphere by removing 3 disks, with two marked points on each boundary component, and $I=\{\gamma_1,\gamma_2,\gamma_3,\gamma_4\}$ be a collection of arcs in $(S,M)$. See the following figure.
%%%%%%%%%%%%%%%%%%%%%%%%%%%%%%%%%%%%%%%%%%%%%%%%%%%%%%%%%%%%%%%%%%%%%%%%%ȱͼ4-2

\begin{center}
\includegraphics[height=2in]{4-2.jpg}
\end{center}
%%%%%%%%%%%%%%%%%%%%%%%%%%%%%%%%%%%%%%%%%%%%%%%%%%%%%%%%%%%%%%%%%%%%%%%%%ȱͼ4-2

Therefore $(S,M)/I$ have three connected components:
%%%%%%%%%%%%%%%%%%%%%%%%%%%%%%%%%%%%%%%%%%%%%%%%%%%%%%%%%%%%%%%%%%%%%%%%%ȱͼ4-3

\begin{center}
\includegraphics[height=2in]{4-4.jpg}
\end{center}

%%%%%%%%%%%%%%%%%%%%%%%%%%%%%%%%%%%%%%%%%%%%%%%%%%%%%%%%%%%%%%%%%%%%%%%%%ȱͼ4-3
The above theorem implies that there are eight cotorsion pairs with core $\X_I$ in $\C_{(S,M)}$.
\end{exm}

\medskip

Since the two parts of a cotorsion pair have the same form, we have the following corollary.

\begin{cor}
 If $(\X,\Y)$ is a cotorsion pair in $\C_{(S,M)}$, then so is $(\Y,\X)$, and they have the same core.
\end{cor}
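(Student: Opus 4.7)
The plan is to deduce the corollary directly from Theorem \ref{thm3} by exploiting the manifest $J \leftrightarrow J^c$ symmetry in its parameterization. First I would identify the core of $(\X,\Y)$ as an $\X_I$ of the form required by Theorem \ref{thm3}; the swap $(\X,\Y) \mapsto (\Y,\X)$ then becomes a purely set-theoretic operation on the index set, and the equality of cores is automatic because $\Y \cap \X = \X \cap \Y$.

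For the identification, set $\X_I := \X \cap \Y$. The equality $\Ext^1_{\C_{(S,M)}}(\X,\Y)=0$ forces $\Ext^1_{\C_{(S,M)}}(\X_I,\X_I)=0$, so $\X_I$ is rigid. Every band object satisfies $X_{(\lambda,b)}[1]=X_{(\lambda,b)}$ and therefore has a nonzero self-extension, so no band object can lie in $\X_I$; by Theorem \ref{thm1} the remaining indecomposable summands of $\X_I$ correspond to curves with pairwise intersection number $0$, i.e.\ to a collection $I$ of pairwise compatible arcs. Hence Theorem \ref{thm3} applies with this $I$.

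Writing $(S,M)/I$ as a disjoint union of $m$ connected components, Theorem \ref{thm3} furnishes a unique $J \subseteq \{1,\ldots,m\}$ with $(\X,\Y) = (\X(J), \X(J^c))$. Setting $J' := J^c$, one has $(J')^c = J$, and therefore
$$(\Y,\X) \;=\; (\X(J^c), \X(J)) \;=\; (\X(J'), \X((J')^c)),$$
which by another application of Theorem \ref{thm3} is a cotorsion pair with core $\X_I$. This proves both assertions. The only nontrivial step is the preliminary identification of the core as $\X_I$ for a collection of compatible arcs; thereafter the corollary is just the observation that the parameterization in Theorem \ref{thm3} is closed under $J \mapsto J^c$. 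An alternative route would start from $2$-Calabi-Yau duality to get $\Ext^1(\Y,\X) \cong D\Ext^1(\X,\Y) = 0$, but producing the required decomposition $\C = \Y * \X[1]$ directly would essentially reprove the classification, so routing through Theorem \ref{thm3} is the cleanest path.
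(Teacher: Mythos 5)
Your proof is correct and takes essentially the same route as the paper, which derives the corollary immediately from the $J \leftrightarrow J^c$ symmetry in the classification of Theorem \ref{thm3}. You simply spell out the preliminary (and routine) verification that the core of any cotorsion pair is $\X_I$ for a collection $I$ of pairwise compatible arcs, a step the paper leaves implicit.
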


Recall that a co-$t-$structure $(\X,\Y)$ is a cotorsion pair with $\X[-1]\subset\X$ and $\Y[1]\subset\Y$.

\begin{cor}
There is no nontrivial co-$t-$structures in the cluster category of a connected marked surface.
\end{cor}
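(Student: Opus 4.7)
The plan is to deduce the corollary directly from Theorem~\ref{thm2} (classification of $t$-structures when $S$ is connected) together with the preceding corollary, which says that cotorsion pairs come in swapped pairs. The key observation is that the axioms of a co-$t$-structure are simply the $t$-structure axioms with the two parts of the pair interchanged: a $t$-structure $(\X,\Y)$ is a cotorsion pair with $\X[1]\subset\X$, whereas a co-$t$-structure $(\X,\Y)$ is a cotorsion pair with $\Y[1]\subset\Y$. So swapping the ordered pair via $(\X,\Y)\mapsto(\Y,\X)$ should convert a co-$t$-structure into a $t$-structure, after which we can simply invoke Theorem~\ref{thm2}.

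Concretely, suppose $(\X,\Y)$ is a co-$t$-structure in $\C_{(S,M)}$ with $S$ connected. The preceding corollary tells us that $(\Y,\X)$ is again a cotorsion pair in $\C_{(S,M)}$. The co-$t$-structure condition $\Y[1]\subset\Y$ is precisely the requirement that the first component of the pair $(\Y,\X)$ be closed under the shift functor $[1]$; hence $(\Y,\X)$ is a $t$-structure in $\C_{(S,M)}$. Theorem~\ref{thm2} then forces $(\Y,\X)\in\{(\C_{(S,M)},0),(0,\C_{(S,M)})\}$, and therefore $(\X,\Y)\in\{(0,\C_{(S,M)}),(\C_{(S,M)},0)\}$ is one of the two trivial cotorsion pairs, proving the corollary.

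No real obstacle is expected: the argument is a one-line duality move once Theorem~\ref{thm2} and the preceding corollary are granted. The substance of the result is contained entirely in Theorem~\ref{thm2}, and the present corollary is a formal consequence. Notice also that the second defining condition $\X[-1]\subset\X$ of a co-$t$-structure is not needed in the argument: the trivial pairs exhibited above satisfy it automatically, so the weaker requirement $\Y[1]\subset\Y$ already forces triviality.
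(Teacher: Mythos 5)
Your proposal is correct and follows exactly the paper's own argument: swap the pair using the preceding corollary, observe that the swapped pair is a $t$-structure because $\Y[1]\subset\Y$, and conclude by Theorem~\ref{thm2}. The only difference is that you spell out the final bookkeeping slightly more explicitly.
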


\begin{proof}
Let $(\X,\Y)$ be a co-$t-$structure. Then $(\X,\Y)$ is a cotorsion pair which implies $(\Y,\X)$ is also a cotorsion pair by the above corollary. Therefore $(\Y,\X)$ is a $t-$structure. By Theorem \ref{thm2}, $\X=0$ or $\C$.
\end{proof}

\section{A geometric model of cotorsion pairs and their mutations in $\C_{(S,M)}$}

Let $(S,M)$ be a marked surface without punctures and $\C_{(S,M)}$ be the corresponding cluster category. We give in this section a geometric description of mutations of cotorsion pairs in $\C_{(S,M)}$.

By the structure of cotorsion pairs given in Theorem \ref{thm3}, we provide a geometric model of cotorsion pairs.

\begin{defn}
Let $I$ be a collection of pairwise compatible arcs such that $(S,M)/I$ has $m$ components $(S^I_1,M^I_1),\ldots,
(S^I_m, M^I_m)$. For each $J\subseteq\{1,\ldots,m \}$, an $I_J-$painting of $(S,M)$ is obtained from $(S,M)$ by filling black in $(S^I_i,M^I_i)$ for each $i\in J$ and leaving other components white.
%An $I-$painting of $(S,M)$ is defined to be an element of the set of all  $I_J-$painting of $(S,M)$ with $J\subseteq\{1,\ldots,m \}$.
\end{defn}

\begin{exm}
Let $(S,M)$ and $I$ be the same in Example \ref{ex1}. Then there are 8 $I_J-$paintings of $(S,M)$, see the following figure.

\begin{center}
\includegraphics[height=1.7in]{5-1.jpg}
\end{center}
\end{exm}

Note that Theorem \ref{thm3} implies that there is a bijection between the set of $I_J-$paintings of $(S,M)$ and cotorsion pairs with core $\X_I$ in $\C_{(S,M)}$. The bijection sends the black components in an $I_J-$painting of $(S,M)$ to the left part $\X(J)$ of a cotorsion pair $(\X(J),\X(J^c))$, and the white components go to $\X(J^c)$.

\begin{rem}
When $S$ is a disk, the definition of paintings of $(S,M)$ is the same as the definition of
Ptolemy diagrams in \cite{HJR11} (see Theorem A(ii) and Remark 2.6 in \cite{HJR11}).
\end{rem}

Let $(\X_{V^1},\X_{V^2})$ be a cotorsion pair in $\C_{(S,M)}$ with
core $\X_I$ and $\X_D$ be a subcategory of $\C_{(S,M)}$ with
$D\subset I$. Recall that the $\X_D-$mutation $(\mu
^{-1}((\X_{V^1},\X_{V^2});\X_D)$ of $(\X_{V^1},\X_{V^2})$,
introduced in \cite{ZZ}, is defined as a new cotorsion pair
$(\X_D\ast \X_{V^1} [1])\cap {}^{\bot}(\X_D [1]), (\X_D\ast \X_{V^2}
[1])\cap {}^{\bot}(\X_D [1]))$. In particular, $0-$mutation of
$(\X_{V^1},\X_{V^2})$ is just the cotorsion pair
$(\X_{V^1}[1],\X_{V^2}[1])$. Let $$\mu
^{-1}(\X_{V^i};\X_D)=(\X_D\ast \X_{V^i} [1])\cap {}^{\bot}(\X_D
[1]),$$for $i=1$, $2$, the pairs
$(\X_{V^i},\mu^{-1}(\X_{V^i};\X_D))$ are called $\X_D-$mutation
pairs in $\C_{(S,M)}$ (see Definition 2.5 in \cite{IY08}).

\medskip

In the following, we define the rotation of $I_J-$paintings.

\begin{defn} Let $I$ be a collection of pairwise compatible arcs in $(S,M)$ such
that $(S,M)/I$ has $m$ components $(S^I_1,M^I_1),\ldots,
(S^I_m, M^I_m)$, and $D$ be a subcollection of
$I$. Note that each component $(S^D_j,M^D_j)$ of $(S,M)/D$ inherits
an orientation from the orientation of $S$, and contains several
components $(S^I_{j_i},M^I_{j_i})$ of $(S,M)/I$ where
$j_i\in\{1,2,\ldots,m\}$. The $D-$rotation of
$(S^I_{j_i},M^I_{j_i})$ is induced by applying elementary pivot moves on both endpoints of each
curve in $(S^I_{j_i},M^I_{j_i})$, but along the boundary of $(S^D_j,M^D_j)$.
\end{defn}

See the following picture:
\begin{center}
\includegraphics[height=1.5in]{5-2.jpg}
\end{center}
where on the left side, the black quadrangle is $(S^I_{j_i},M^I_{j_i})$ in $(S^D_j,M^D_j)$ denoted by the octagon, then the $D-$rotation of $(S^I_{j_i},M^I_{j_i})$ is the black quadrangle on the right side of the above figure.

\begin{defn} The $D-$rotation of an $I_J-$painting of $(S,M)$ is defined to be a new painting of $(S,M)$ induced by the $D-$rotation of all components $(S^I_{i},M^I_{i})$ of $(S,M)/I$.
\end{defn}

\begin{exm}
We take one $I_J-$painting in Example \ref{ex1} where $J=\{1,3\}$. Let $D=\{\gamma_2,\gamma_4\}$. Then the $D-$rotation of this $I_J$-painting can be described as follows.
\begin{center}
\includegraphics[height=2.8in]{5-3.jpg}
\end{center}
\end{exm}

Finally, we show the rotations of paintings of $(S,M)$ provide a geometric model of mutations of cotorsion pairs in $\C_{(S,M)}$.

\begin{thm}\label{rotation}
Let $\X_I$ be a rigid subcategory of $\C_{(S,M)}$ such that
$(S,M)/I$ has $m$ connected components $(S^I_j,M^I_j)$, $1\leq j\leq
m$, and $\X_D$ be a subcategory of $\C_{(S,M)}$ with $D\subset I$.
Then the $\X_D-$mutations of cotorsion pairs $(\X(J),\X(J^c))$ are compatible with
the $D-$rotations of $I_J-$paintings of $(S,M)$, under the bijection between
cotorsion pairs in $\C_{(S,M)}$ and paintings of $(S,M)$.
\end{thm}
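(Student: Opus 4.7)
The plan is to reduce the statement, via the Marsh--Palu equivalence, to the assertion that in $\C_{(S,M)/D}$ the shift functor $[1]$ realises the geometric $D$-rotation of curves in each subcomponent $(S^I_{j_i},M^I_{j_i})$ of $(S,M)/I$.

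First, I would pass to a subfactor. By Proposition~\ref{propMP}, the functor $\pi_D\colon \C_{\X_D}\overset{\sim}{\longrightarrow}\C_{(S,M)/D}$ sends $X^{(S,M)}_\gamma$ to $X^{(S,M)/D}_\gamma$ for every $\gamma\in\V_D(S,M)$. Since $I\setminus D\subset\V_D(S,M)$, the image of $\X_I/\X_D$ is the rigid subcategory $\X_{I\setminus D}$, and each component of $(S,M)/I$ is also a component of $\bigl((S,M)/D\bigr)/(I\setminus D)$. By Theorem~3.5 of~\cite{ZZ11}, cotorsion pairs in $\C_{(S,M)}$ with core $\X_I$ correspond bijectively to $t$-structures in ${}^{\bot}\X_I[1]/\X_I$, which via $\pi_D$ correspond to cotorsion pairs in $\C_{(S,M)/D}$ with core $\X_{I\setminus D}$; under this dictionary the painting data is carried over component by component, so that $(\X(J),\X(J^c))$ maps to the cotorsion pair of $\C_{(S,M)/D}$ with the same subset $J$ of black components.

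Second, I would identify $\X_D$-mutation with the shift in the subfactor. By the construction of the subfactor triangulated structure recalled in Section~2.3, the shift $\langle 1\rangle$ on $Y\in{}^{\bot}\X_D[1]$ arises from a triangle $Y\to D_Y\to Y\langle 1\rangle\to Y[1]$ in $\C$, so $Y\langle 1\rangle\in(\X_D\ast Y[1])\cap{}^{\bot}\X_D[1]$. Hence the explicit formula
\[
\mu^{-1}(\X_{V^i};\X_D)=(\X_D\ast\X_{V^i}[1])\cap{}^{\bot}(\X_D[1])
\]
exhibits $\mu^{-1}(\X_{V^i};\X_D)$ as the preimage of $\overline{\X_{V^i}}\langle 1\rangle$, and consequently under $\pi_D$ the $\X_D$-mutation becomes the shift $[1]$ of the corresponding cotorsion pair in $\C_{(S,M)/D}$. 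For the last step I would invoke the pivot description recalled in Section~\ref{marked surface}, namely $X_\gamma[1]=X_{{}_s\gamma_e}$, so that shifting in $\C_{(S,M)/D}$ is exactly the simultaneous elementary pivot of both endpoints of $\gamma$ along the boundary of the containing component $(S^D_j,M^D_j)$. Restricted to each $(S^I_{j_i},M^I_{j_i})$, this is by definition the $D$-rotation, and it manifestly preserves the black/white labelling of components.

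The main obstacle, I expect, is the bookkeeping in the first step: to verify that the bijection between cotorsion pairs in $\C_{(S,M)}$ with core $\X_I$ and cotorsion pairs in $\C_{(S,M)/D}$ with core $\X_{I\setminus D}$ really does intertwine $\X_D$-mutation with the shift $[1]$, and that under this identification a black component $(S^I_{j_i},M^I_{j_i})$ of the original painting is sent to a black component of the shifted painting. Once that translation dictionary is in place, Step~2 is essentially the definition of the subfactor shift and Step~3 is the elementary pivot description of $[1]$ that has already been used repeatedly in the paper, so assembling the three steps yields the compatibility claimed in the theorem.
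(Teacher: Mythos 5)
Your proposal follows essentially the same route as the paper: pass to the subfactor $\C_{\X_D}\cong\C_{(S,M)/D}$ via the Marsh--Palu equivalence (Proposition~\ref{propMP}), identify the $\X_D$-mutation of the cotorsion pair with the subfactor shift $\langle1\rangle$, and match $\langle1\rangle$ with simultaneous elementary pivot moves on endpoints, hence with $D$-rotation. The ``main obstacle'' you flag at the end---verifying that under the identification the $\X_D$-mutation really does become the shift---is exactly where the paper invokes Proposition~4.9 of \cite{IY08}, which gives that $(\overline{\X},\overline{\mu^{-1}(\X;\X_D)})$ is a $0$-mutation pair in $\C_{\X_D}={}^{\bot}\X_D[1]/\X_D$, hence $\overline{\mu^{-1}(\X(J);\X_D)}=\overline{\X(J)}\langle1\rangle$. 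Your triangle argument in Step~2, showing that $Y\langle1\rangle\in(\X_D\ast Y[1])\cap{}^{\bot}\X_D[1]$, only establishes the inclusion $\overline{\X_{V^i}}\langle1\rangle\subset\overline{\mu^{-1}(\X_{V^i};\X_D)}$; the reverse inclusion (that every object of $\mu^{-1}(\X_{V^i};\X_D)$ is, modulo $\X_D$, a shift $Y\langle1\rangle$ with $Y\in\X_{V^i}$) still needs justification, and that is precisely what the cited result of \cite{IY08} supplies. With that reference inserted to close Step~2, the rest of your argument matches the paper's.
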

\begin{proof}
Assume $(\X(J),\X(J^c))$ is a cotorsion pair in $\C_{(S,M)}$ with core
$\X_I,$ where $J\subseteq \{1,2,\ldots,m\}.$ Then Proposition 4.9 in \cite{IY08} implies
that for any $\X_D-$mutation pair $(\X,\mu^{-1}(\X;\X_D))$ in
$\C_{(S,M)}$, $(\overline{\X},\overline{\mu^{-1}(\X;\X_D)})$ forms a
$0-$mutation pair in $\C_{\X_D}={}^\bot\X_D[1]/\X_D$. Hence,
$$\overline{\mu^{-1}(\X(J);\X_D)}=\overline{\X(J)}\langle1\rangle,~~\overline{\mu^{-1}(\X(J^c);\X_D)}=\overline{\X(J^c)}\langle1\rangle$$
 where $\langle1\rangle$ is the shift functor in $\C_{\X_D}$. It follows from the equivalence between $\C_{\X_D}$ and $\C_{(S,M)/D}$ (compare Proposition 2.4) that the funtor $\langle1\rangle$ corresponds to the pivot elementary moves on both endpoints in $(S,M)/D$. Therefore the $\X_D-$mutation of $(\X(J),\X(J^c))$ is compatible with the $D-$rotation of the corresponding $I_J-$painting of $(S,M).$
\end{proof}

\begin{rem}  The above theorem is a generalization of Theorem 4.4 in \cite{ZZ11}.
When $S$ is a disk, the rotation of paintings is the mutation of Ptolemy diagrams in \cite{ZZ11}.
\end{rem}

{\bf Acknowledgments}

\medskip

The authors would like to thank Peter J{\o}rgensen for his comments and suggestions, they are very
grateful to him for pointing out an error in a previous version of Definition \ref{def}(2).

\end{document}